\newtheorem{teo}{Theorem}[section]
\newtheorem{prop}{Proposition}[section]
\newcommand{\R}{\mathbb{R}}
\newcommand{\N}{\mathbb{N}}
\newcommand{\Rd}{{\mathbb{R}^{d}}}
\tikzstyle{block}=[draw opacity=0.7,line width=1.4cm]
\tikzset{fontscale/.style = {font=\relsize{#1}}
}
\title{Multiple large-time behavior of nonlocal interaction equations with quadratic diffusion}
\author{M. Di Francesco, Y. Jaafra}
\address{M. Di Francesco, y. Jaafra - DISIM - Department of Information Engineering, Computer Science and Mathematics, University of L'Aquila, Via Vetoio 1 (Coppito)
	67100 L'Aquila (AQ) - Italy}
\email{marco.difrancesco@univaq.it}
\email{yahyaya.jaafra@graduate.univaq.it}
\date{}
\begin{document}
	
	\begin{abstract}
		In this paper we consider a one-dimensional nonlocal interaction equation with quadratic porous-medium type diffusion in which the interaction kernels are attractive, nonnegative, and integrable on the real line. Earlier results in the literature have shown existence of nontrivial steady states if the $L^1$ norm of the kernel $G$ is larger than the diffusion constant $\varepsilon$. In this paper we aim at showing that this equation exhibits a "multiple" behavior, in that solutions can either converge to the nontrivial steady states or decay to zero for large times. We prove the former situation holds in case the initial conditions are concentrated enough and "close" to the steady state in the $\infty$-Wasserstein distance. Moreover, we prove that solutions decay to zero for large times in the diffusion-dominated regime $\varepsilon\geq \|G\|_{L^1}$. Finally, we show two partial results suggesting that the large-time decay also holds in the complementary regime $\varepsilon< \|G\|_{L^1}$ for initial data with large enough second moment. We use numerical simulations both to validate our local asymptotic stability result and to support our conjecture on the large time decay.
	\end{abstract}
	
	\maketitle
	
	\section{Introduction}
	
	Several phenomena in biology are governed by the combination of long-range attractive effects with short-range repulsive ones. Examples range from chemotaxis of cells (see e.g. \cite{blanchet}) to swarming phenomena in animal biology \cite{Morale2005,capasso,mogilner,Topaz2006}. These situations can be modeled at either particle (microscopic) or continuum (macroscopic) level. In the former case one assumes that in a given finite set of particles (or individuals) each pair of particles interact through a "drift" velocity field depending on their distance, consisting of \emph{attractive} and \emph{repulsive} forces. What characterizes our approach is that the attractive force maintains a long range scale while passing to the continuum regime, whereas the range of interaction in the repulsive terms degenerates as the number of individuals becomes large. Such approach leads to our prototype model
	\begin{equation}\label{eq:main_intro}
	\partial_{t}\rho=\mathrm{div}(\rho\nabla(\varepsilon\rho
	-G*\rho)) \hspace{1.5cm} (x,t)\in \Rd\times [0,+\infty),
	\end{equation}
	in which the unknown $\rho=\rho(x,t)$ is sought in the set of time dependent curves from $t\in [0,+\infty)$ onto the space of nonnegative $L^1$ densities with fixed mass. 
	
	In \eqref{eq:main_intro} the quadratic diffusion term models (local) repulsive effects via a diffusion constant $\varepsilon>0$, and the nonlocal attractive term is governed by an interaction potential $G:\Rd\rightarrow[0,+\infty)$ satisfying $G(x)=g(|x|)$ for some $C^1$ function $g:[0,+\infty)\rightarrow [0,+\infty)$ with $g'<0$. The latter assumption in particular implies that the nonlocal drift term in \eqref{eq:main_intro} is \emph{attractive}, in that it yields a decrease in time of all moments $\int |x|^p \rho(x,t)\, dx$ of any order $p\in [1,+\infty)$. As a consequence of that, $\rho$ is expected to concentrate to a Dirac's delta centered at the initial center of mass as $t\rightarrow +\infty$ in case $\varepsilon = 0$. On the other hand, the diffusion part exerts an opposed effect on $\rho$, in that it implies a growth of all moments, and it would lead the solution to zero as $t\rightarrow +\infty$ in case $G\equiv 0$. Models of the form \eqref{eq:main_intro} can be recovered not only in biology as mentioned above, but also in material sciences (e.g. granular media models \cite{toscani_granular} and crystal defects modeling \cite{peletier}) and in social sciences, see for example pedestrian movements \cite{helbing} and opinion formation modeling \cite{sznajd}. The latter in particular justifies the derivation of PDE from a discrete set of equation, which can be stochastic of deterministic depending on the context. 
	
	The mathematical theory for \eqref{eq:main_intro} heavily relies on the associated \emph{energy functional}
	\begin{equation}\label{energy_intro}
	E[\rho]=\frac{\varepsilon}{2}\int_{\R^d}\rho^{2}dx-\frac{1}{2}\int_{\R^d}\rho G*\rho dx
	\end{equation}
	defined on the space of probability measures $\mathcal{P}(\R^d)$ (the total mass is preserved in time in \eqref{eq:main_intro}) with finite $L^2$-norm, with the obvious extension to $+\infty$ when $\rho\in \mathcal{P}(\R^d)\setminus L^2(\R^d)$. The energy functional $E$ can be used to interpret \eqref{eq:main_intro} as a Wasserstein gradient flow in the sense of \cite{AGS,JKO}. Assuming that $G$ is $\lambda$-convex as a function on $\R^d$ for some $\lambda\in \R$ (i.e. $G(x)+\frac{\lambda}{2}|x|^2$ convex), then the functional $E$ is $\mu$-displacement convex in the sense of \cite{mccann,AGS} with $\mu=\min\{2\lambda,0\}$. The existence (and uniqueness) provided by the so-called JKO scheme \cite{JKO} is global-in-time in $L^2$ provided $G$ is smooth enough. Unlike the $2d$ Keller-Segel model for chemotaxis (see e.g. \cite{jager,blanchet}), no concentrations to Dirac's deltas occur in finite time if $G$ is has (for example) two continuous and bounded derivatives. The case $\varepsilon=0$ has been studied by many authors \cite{li2004long,bodnar,burger2008large,fellner,bertozzi1,bertozzi2,bertozzi3}. 
	
	As solutions cannot blow up in finite time when $G$ is smooth, the dichotomy between the repulsive diffusive effect and the attractive effect induced by the nonlocal term is resolved in terms of the large time behavior: a diffusive behavior would result in the large time decay of solutions as $t\rightarrow +\infty$, for example w.r.t the $L^2$ norm, whereas the aggregative behavior would yield the formation of a pattern, i.e. a (nontrivial) steady state. Such a problem has been studied mainly in two papers in the past years. In \cite{Bedrossian20111927} the problem has been addressed at the level of the existence of nontrivial minimizers for the energy functional $E[\rho]$. Almost at the same time, \cite{BurDiFFra12} addressed the existence and uniqueness of nontrivial steady states for \eqref{eq:main_intro}. The combination of the two results provide the following general picture in the special case in which $G$ is \emph{nonnegative} and \emph{integrable}:
	\begin{itemize}
		\item If $\varepsilon \geq \|G\|_{L^1(\R^d)}$, then the only stationary state for \eqref{eq:main_intro} is $\rho\equiv 0$. No global minimizers exist under the fixed mass constraint $\int \rho \, dx = M>0$.\footnote{It follows as a special case of the results in \cite{Bedrossian20111927}. The critical case $\varepsilon=\|G\|_{L^1}$ is treated in \cite{BurDiFFra12}.}
		\item If $0\leq \varepsilon < \|G\|_{L^1(\R^d)}$, then the functional $E$ has a nontrivial global minimizer in $L^1(\R^d)$ under the constraint $\int \rho \, dx = M>0$. Moreover, in case $G$ is supported on $\R$, the equation \eqref{eq:main_intro} has a unique nontrivial stationary solution up to multiplications by a constant and translations, which coincides with the global minimizer for $E$. \footnote{The existence of minimizers follows as a byproduct of the results in \cite{Bedrossian20111927}. The uniqueness result in one space dimension was first proved in  \cite{BurDiFFra12}. The multidimensional uniqueness result is due to \cite{kaib}.}
	\end{itemize}
	Partial extensions of the previous results to the case of a porous medium term $\mathrm{div} \rho\nabla \rho^{m-1}$ were proved in \cite{burger_fetecau_wang,choksi,kaib}. Let us mention at this stage that an extensive (and very deep) literature has been produced for the existence of global minimizers of the energy $E$ in which the interaction potential is not integrable and featuring confining property at infinity. For the case of Newtonian potentials we mention \cite{lions84,lieb_yau,blanchet_carrillo_laurencot,kim_yao,volzone}. For more general kernels we mention \cite{calvez_carrillo_hoffmann}.
	
	Once the existence or non-existence of nontrivial steady states is clear, a natural question arises on whether or not those steady states are attractors for the semi-group \eqref{eq:main_intro}. Even in the case of potentials with confining properties, the situation is not completely clear except in the case of "smooth" power laws $G(x)=|x|^\gamma$, with $\gamma\geq 1$, see \cite{carrillo_mccann_villani}. We refer to the introduction of \cite{calvez_carrillo_hoffmann} for a very clear and detailed explanation. To our knowledge, the only result which deals with this issue is \cite{bedrossian2014}, in which the large time decay of solutions is proved for initial data with large second moment and large mass, with potentials essentially behaving as the Bessel potential in $\R^d$ and in high enough dimension. The problem of detecting the large time behavior seems more difficult in the case of \eqref{eq:main_intro}, as the quadratic homogeneity in the energy \eqref{energy_intro} does not allow to play with the initial mass in order to penalize one of the two terms in the right-hand side of \eqref{eq:main_intro}. Moreover, having to do with an integrable kernel under pretty general assumptions does not allow to use the homogeneity of the kernel and rescale the equation, as often done in the case of confining interaction potentials.
	
	In our paper we address the problem of the large time behavior of \eqref{eq:main_intro}. Our results are restricted to the one-dimensional case and to the case of smooth potentials. More precisely, we assume
	\begin{itemize}
		\item[(1)] $G \geq 0,$ and $\mathrm{supp}(G)=\R,$
		\item[(2)] $G \in L^{1}(\R) \cap L^{\infty}(\R) \cap C^{2}(\R),$
		\item[(3)] $G(x)=G(-x)$ for all $x\in \R,$
		\item[(4)] $G^{\prime\prime}(x)< -c <0$ on $[-\lambda,\lambda]$ for some $\lambda, c > 0$, 
		\item[(5)] $G^{\prime}(x)<0$ for all $x>0$.
	\end{itemize}
For simplicity and without restriction, we shall assume throughout the paper that $\|G\|_{L^1}=1$. Next we summarize the structure of the paper as well as our main results.
	\begin{itemize}
		\item We first prove the unique steady state provided by \cite{BurDiFFra12} in the case $\varepsilon\in (0,1)$ is locally asymptotically stable in the $2$-Wasserstein distance. The result is stated in Theorem \ref{mainthm}.
		\item We prove in Theorem \ref{T2} that all solutions with finite energy $E$ decay to zero locally in $L^2$ and almost everywhere in $x\in \R$ as $t\rightarrow +\infty$ in case $\varepsilon\geq 1$.
		\item In section \ref{sec:arguments} we provide some (incomplete) arguments suggesting that the large time decay may occur also in the case $\varepsilon\in (0,1)$ for suitable initial conditions. 
		\item In section \ref{sec:numerics} we produce some numerical simulation to support our conjecture that \eqref{eq:main_intro} with $\varepsilon\in(0,1)$ features a multiple behavior for large times, i.e. that there are more than one attractors for \eqref{eq:main_intro} as $t\rightarrow +\infty$.
	\end{itemize}

	\section{Local Stability of Steady States for Smooth Attractive Potentials}\label{sec:stationary}
	
	In this section we study the long-time behavior of the solution to the one-dimensional evolution equation
	\begin{equation}
	\label{assymptotic1}
	\partial_{t}\rho=\partial_{x}(\rho\partial_{x}(\varepsilon\rho
	-G*\rho)) \hspace{1.5cm} \R\times \R^{+}.
	\end{equation}
	We do the whole analysis on a new formulation of the evolution equation \eqref{assymptotic1} obtained by using the Wasserstein metric in one dimensional space.
	
	We consider equation \eqref{assymptotic1} where the unknown $\rho(.,t)$ is a time-dependent probability density on $\R$, $\varepsilon$ is a fixed constant in $(0,\|G\|_{L^1}),$ and $G$ is the aggregation kernel with the following assumptions
	
	\begin{itemize}
		\item[(1)] $G \geq 0,$ and supp$(G)=\R,$
		\item[(2)] $G \in W^{1,1}(\R) \cap L^{\infty}(\R) \cap C^{2}(\R),$
		\item[(3)] $G(x)=g(|x|)$ for all $x\in \R,$
		\item[(4)] $G^{\prime\prime}(x)< -c <0$ on $[-\lambda,\lambda]$ where $\lambda, c > 0$, 
		\item[(5)] $g^{\prime}(r)<0$ for all $r>0,$
		\item[(6)] $\lim_{r\rightarrow +\infty}g(r)=0.$
	\end{itemize}
	
	In the following we review briefly the Wasserstein metric and we shall see how one can reformulate \eqref{assymptotic1} using a simplified expression of the $p-$Wasserstien distance obtained in one space dimension written in terms of pseudo inverses of the cumulative distributions of some probability measures.

	
	Let $\mathcal{P}(\Rd)$ be the space of the probability measures on $\Rd$. We denote by $\mathcal{P}_{p}(\Rd)$ the space of probability  measures $\mu\in \mathcal{P}(\Rd)$ having a finite $p-$moment $\int_{\Rd}|x|^{p}d\mu(x) < +\infty$. Then for two probability measures $\mu_{1}$ and $\mu_{2}$ in $\mathcal{P}_{p}(\Rd),$ the $p-$Wasserstein distance between them is defined by 
	
	\begin{equation}
	\label{assymptotic4}
	W_{p}(\mu_{1},\mu_{2})^{p}=\inf \bigg\{ \int\int_{\Rd\times\Rd} |x-y|^{p}d\pi(x,y), \pi\in\Pi(\mu_{1},\mu_{2}) \bigg\},
	\end{equation}
	where $\Pi(\mu_{1},\mu_{2})$ is the space of all measures $\pi$ on the product space $\Rd\times\Rd$ having $\mu_{1}$ and $\mu_{2}$ as marginals, i.e.
	
	\begin{equation*}
	\int\int_{\Rd\times\Rd} f(x_{i})d\pi(x_{1},x_{2})= \int_{\Rd}f(x_{i})d\mu_{i}(x_{i}),
	\end{equation*}
	for any $\mu_{i}-$integrable Borel function $f$, $i=1,2.$


	In the one dimensional space the $p-$Wasserstein distance can be rewritten by a different expression than \eqref{assymptotic4} which simplifies the analysis. In particular, the $p-$Wasserstein distance between $\mu_{1}$ and $\mu_{2}$ can be written in one space dimension as the $L^{p}-$ difference between the pseudo inverses of the cumulative distributions of $\mu_{1}$ and $\mu_{2}$. More precisely, let $F_{i} : \R \rightarrow [0,1]$, $i=1,2$, be the distribution function defined by
	\begin{equation}
	\label{assymptotic6}
	F_{i}(x) = \mu_{i}((\-\infty,x]), \hspace{2ex} i=1,2.
	\end{equation}
	Then the pseudo-inverse function $u_{i}:[0,1]\rightarrow \R$ of $F_{i}$ is defined by
	\begin{equation}
	\label{assymptotic7}
	u_{i}(z):= F_{i}^{-1}(z)= \inf\{x\in \R \hspace{1ex}| \hspace{1ex} F_{i}(x)>z\}, \hspace{3ex} i=1,2.
	\end{equation}
	Using these notations, the $p-$Wasserstein distance between $\mu_{1}$ and $\mu_{2}$ has the following new expression (see \cite{villani})
	\begin{equation}
	\label{assymptotic8}
	W_{p}(\mu_{1},\mu_{2})= \| u_{1}-u_{2}\|_{L^{p}([0,1])}.
	\end{equation}

	
	Following the procedure in the seminal papers \cite{carrillo2005wasserstein,russo1990deterministic}, we now rewrite the evolution equation \eqref{assymptotic1} in terms of the pseudo inverse function. Assume for simplicity the solution $\rho(t)$ to \eqref{assymptotic1} is smooth, positive, and has a connected compact support. Let $F(t)$ be its cumulative distribution function. Then one can easily show that its inverse $u(t):[0,1] \rightarrow \R$ satisfies the following equation
	\begin{equation}
	\label{assymptotic9}
	\partial_{t} u = -\frac{\varepsilon}{2} \partial_{z}\big( (\partial_{z}u)^{-2} \big) + \int_{0}^{1} G^{\prime}\big( u(z,t)-u(\xi,t) \big)d\xi, \hspace{2.5ex} (z,t)\in [0,1]\times[0,+\infty).
	\end{equation}
	Then, thanks to the expression \eqref{assymptotic8}, we can study the $p-$Wasserstein distance between the generic solution and the steady state of \eqref{assymptotic1} by direct computations on the time evolution of the $L^{p}$ distance of the difference between the solution and the steady state to the pseudo inverse equation \eqref{assymptotic9}. In principle, such a computation would  require enough regularity on the pseudo-inverse function and a compact support for all times. However, a standard approximation procedure can be implemented to bypass this problem, see \cite{gualdani} \cite{li2004long} - in which this technique was used for the first time - and \cite{burger2008large}, in which the initial condition gets approximated by a strictly positive, smooth, bounded density and a zero-flux condition is imposed on a large interval including the support of the solution at all times. The fact that the support remains compact is a consequence of \cite[Theorem 2.12]{burger2008large}. We omit the details, for which we refer to the aforementioned references.

	
	We consider the time evolution of the Wasserstein distance \eqref{assymptotic8} between a generic solution and the stationary one of equation \eqref{assymptotic1}. In particular, we will prove that the $2$-Wasserstein distance between the generic solution and the steady state to equation \eqref{assymptotic1} goes exponentially to zero with respect to time under some assumptions on the initial datum and the steady state. More precisely, we take the initial datum to be close to the steady state, in particular we consider "confined" initial data.
	
	Let $\rho_{\infty}(x)$ denote the unique stationary solution to equation \eqref{assymptotic1} with unit mass and zero center of mass, and let $u_{\infty}(z)$ denote the pseudo inverse of its cumulative distribution. Consider equation \eqref{assymptotic1} with an initial datum $\rho(x,0)=\rho_{0}(x)$ with unit mass and zero center of mass, and let $u_{0}(z)$ be the corresponding pseudo inverse. Then let $u$ be the corresponding solution to \eqref{assymptotic9}. The time evolution of the Wasserstein distance $\| u(z,t)-u_{\infty}(z)\|_{L^{2k}([0,1])}^{2k}$ where $k\in\N,$ is easily found to satisfy
	
	\begin{equation}
	\label{evolution}
	\begin{aligned}
	\frac{d}{dt} \| u(z,t)-u_{\infty}(z)\|_{L^{2k}([0,1])}^{2k} &= 2k \int_{0}^{1} [u(z,t)-u_{\infty}(z)]^{2k-1} \partial_{t}u(z,t)\, dz
	\\
	& = -k\varepsilon \int_{0}^{1} [u(z,t)-u_{\infty}(z)]^{2k-1} \partial_{z}((\partial_{z}u)^{-2})(z,t)\,dz 
	\\
	& \qquad\qquad\qquad + 2k\int_{0}^{1}\int_{0}^{1}[u(z,t)-u_{\infty}(z)]^{2k-1}G^{\prime}(H(z,\xi))d\xi dz
	\end{aligned}
	\end{equation} 
	where, to simplify notation, we set
	
	\begin{equation}
	\label{H}
	H(z,\xi;t):= u(z,t)-u(\xi,t).
	\end{equation}
	Then since $u_{\infty}$ is a stationary solution, it satisfies 
	
	\begin{equation}
	\label{Stationary}
	0 = -\frac{\varepsilon}{2} \partial_{z}((\partial_{z}u_{\infty})^{-2}) + \int_{0}^{1} G^{\prime}(u_{\infty}(z)-u_{\infty}(\xi))d\xi.
	\end{equation}
	Using \eqref{Stationary}, \eqref{evolution} becomes 
	
	\begin{equation}\label{I1I2}
	\begin{aligned}
	&  \frac{d}{dt} \| u(z,t)-u_{\infty}(z)\|_{L^{2k}([0,1])}^{2k} =
	\\
	& = -k\varepsilon \int_{0}^{1} [u-u_{\infty}]^{2k-1} \Big[ \partial_{z}((\partial_{z}u)^{-2}) - \partial_{z}((\partial_{z}u_{\infty})^{-2}) \Big] dz 
	\\
	&\qquad + 2k\int_{0}^{1}\int_{0}^{1} [u-u_{\infty}]^{2k-1} \Big[ G^{\prime}(H(z,\xi)) - G^{\prime}(K(z,\xi)) \Big] d\xi dz
	\\
	& := I_{1} + I_{2}
	\end{aligned}
	\end{equation} 
	with $H(z,\xi)$ defined by \eqref{H} and 
	
	\begin{equation}
	\label{K}
	K(z,\xi):= u_{\infty}(z)-u_{\infty}(\xi).
	\end{equation}
	Now, we analyze the two integrals $I_{1}$ and $I_{2}$ separately
	
	\begin{equation}
	\label{I1}
	\begin{aligned}
	I_{1} & =  -k\varepsilon \int_{0}^{1} [u-u_{\infty}]^{2k-1}\partial_{z} \big[ (\partial_{z}u)^{-2} - (\partial_{z}u_{\infty})^{-2} \big] dz
	\\
	& = \varepsilon k(2k-1)\int_{0}^{1} [u-u_{\infty}]^{2k-2} [\partial_{z}u - \partial_{z}u_{\infty}] \big[ (\partial_{z}u)^{-2} - (\partial_{z}u_{\infty})^{-2} \big] dz
	\\
	& = \varepsilon k(2k-1)\int_{0}^{1} [u-u_{\infty}]^{2k-2} [\partial_{z}u - \partial_{z}u_{\infty}]^2 \frac{\big[ (\partial_{z}u)^{-2} - (\partial_{z}u_{\infty})^{-2} \big]}{[\partial_{z}u - \partial_{z}u_{\infty}]} dz.
	\end{aligned}
	\end{equation}
	Then since $\partial_{z}u_{\infty},~\partial_{z}u>0$ and the slope of $f(x)=x^{-2}$ is always negative on $x>0$, we get $I_{1}<0$.
	Note that since the solution has a compact support, the boundary terms vanish in the integration by parts.
	
	For $I_{2}$, we first consider the case where $k=1$. Using the same technique used in \cite{li2004long}, we conclude the following identity
	
	\begin{equation}
	\label{I2}
	I_{2}  = \int_{0}^{1}\int_{0}^{1} [H-K] \big[ G^{\prime}(H) - G^{\prime}(K) \big] d\xi dz,
	\end{equation} 
	in which we have used that 
	\begin{equation}
	\label{zero}
	\int_{0}^{1}[u(z,t)-v(z,t)]dz=0 
	\end{equation}
	for any two solutions $u$ and $v$ of \eqref{assymptotic9} with zero center of mass. Moreover, using \eqref{zero} we also have the following 
	
	\begin{equation}
	\label{equality}
	\int_{0}^{1}\int_{0}^{1} [H-K]^{2}dzd\xi = 2\int_{0}^{1} [u(z,t)-u_{\infty}(z)]^{2}dz.
	\end{equation}
	
	Next we prove that if we take the support of the steady state to be inside the region where the kernel $G$ is strictly concave, assuming as well that the support of the initial datum is close to the steady state, then for all $t>0$ the solution $\rho(t)$ will be very close to the steady state $\rho_{\infty}$ in the $\infty$-Wasserstein distance.
	
	\begin{prop}
		\label{T1}
		Let $\rho(x,t)$ be the solution to \eqref{assymptotic1} having initial density $\rho_{0}\in L^{2}(\R)\cap L_{+}^{1}(\R)$ with unit mass and compact support. Let $\rho_{\infty}(x)$ be the stationary solution to \eqref{assymptotic1} with unit mass and same center of mass of $\rho_0$. Let $u_0$ and $u_\infty$ be the pseudo-inverse variables corresponding to $\rho_0$ and $\rho_\infty$ respectively. Assume
		
		\begin{itemize}
			\item[(i)] $\|u_{\infty}\|_{L^{\infty}([0,1])} < \frac{\lambda}{4}-\delta$,
			\item[(ii)] $ \|u_0-u_{\infty}\|_{L^{\infty}([0,1])}<\frac{\lambda}{4}-\delta$, 
		\end{itemize}
		where $\lambda>0$ is s.t. $G^{\prime\prime}(x)<-c<0$ on $[-\lambda,\lambda]$ for some $c>0$ and $\frac{\lambda}{4}>\delta>0$ is arbitrarily small. Then 
		
		\begin{equation}
		\label{assymptotic10}
		\|u(\cdot,t)-u_{\infty}\|_{L^{\infty}([0,1])} \leq \|u_0-u_{\infty}\|_{L^{\infty}([0,1])},
		\end{equation}
		for all $t\geq 0$.
	\end{prop}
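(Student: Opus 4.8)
The plan is to run the energy-type estimate \eqref{I1I2} at the level of the $L^{2k}$ norm, show that its right-hand side stays non-positive as long as the solution remains in the region where $G$ is strictly concave, and then pass to the limit $k\to\infty$ to recover the claimed $L^\infty$ (i.e.\ $\infty$-Wasserstein) contraction \eqref{assymptotic10}. Throughout write $w(\cdot,t):=u(\cdot,t)-u_\infty$; since the flow preserves the center of mass, $\int_0^1 w(z,t)\,dz=0$ for all $t$, which matches \eqref{zero} and guarantees that $u(\cdot,t)$ and $u_\infty$ stay genuinely comparable.

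First, $I_1\le 0$ for every $k\in\N$: this is exactly the computation \eqref{I1}, whose integrand is the product of the even power $[u-u_\infty]^{2k-2}\ge 0$, the square $[\partial_z u-\partial_z u_\infty]^2\ge 0$, and the difference quotient of $x\mapsto x^{-2}$ between the positive quantities $\partial_z u$ and $\partial_z u_\infty$, which is negative; so $I_1\le 0$ unconditionally. The work is thus concentrated on $I_2$. Here I would first symmetrize: relabeling $z\leftrightarrow\xi$ and using that $G'$ is odd (assumption (3)) turns $I_2$ into
\[
I_2 = k\int_0^1\!\!\int_0^1 \big(w(z)^{2k-1}-w(\xi)^{2k-1}\big)\big[G'(H(z,\xi))-G'(K(z,\xi))\big]\,d\xi\,dz ,
\]
with $H,K$ as in \eqref{H}, \eqref{K} and $H-K=w(z)-w(\xi)$ (for $k=1$ this reduces to \eqref{I2}). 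Now suppose that at time $t$ one has $\|w(\cdot,t)\|_{L^\infty([0,1])}<\lambda/4$. Combined with hypothesis (i) this gives $\|u(\cdot,t)\|_{L^\infty}<\lambda/2-\delta$, hence $|H(z,\xi)|\le 2\|u(\cdot,t)\|_{L^\infty}<\lambda$ and $|K(z,\xi)|\le 2\|u_\infty\|_{L^\infty}<\lambda$ for all $z,\xi$. Writing $G'(H)-G'(K)=\big(\int_0^1 G''(K+s(H-K))\,ds\big)(H-K)$ and noting that $K+s(H-K)\in(-\lambda,\lambda)$, assumption (4) makes the bracket $<-c<0$; thus $G'(H)-G'(K)$ has the sign opposite to $w(z)-w(\xi)$, while $w(z)^{2k-1}-w(\xi)^{2k-1}$ has the same sign as $w(z)-w(\xi)$ (the odd power is increasing). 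The integrand is pointwise $\le 0$, so $I_2\le 0$.

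Combining the two bounds, $\frac{d}{dt}\|w(\cdot,t)\|_{L^{2k}([0,1])}^{2k}=I_1+I_2\le 0$ whenever $\|w(\cdot,t)\|_{L^\infty}<\lambda/4$. To globalize, I would set $T^\ast:=\sup\{T\ge 0:\ \|w(\cdot,t)\|_{L^\infty}<\lambda/4\ \text{for all}\ t\in[0,T]\}$, which is positive by hypothesis (ii) and continuity of $t\mapsto\|w(\cdot,t)\|_{L^\infty}$. On $[0,T^\ast)$ the monotonicity gives $\|w(\cdot,t)\|_{L^{2k}([0,1])}\le\|w(\cdot,0)\|_{L^{2k}([0,1])}\le\|w(\cdot,0)\|_{L^\infty}$ (the interval has unit length), and letting $k\to\infty$ yields $\|w(\cdot,t)\|_{L^\infty}\le\|w(\cdot,0)\|_{L^\infty}<\lambda/4-\delta$ for every $t<T^\ast$. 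Were $T^\ast$ finite, continuity would force $\|w(\cdot,T^\ast)\|_{L^\infty}\le\lambda/4-\delta<\lambda/4$, hence $\|w(\cdot,t)\|_{L^\infty}<\lambda/4$ slightly past $T^\ast$, contradicting maximality. Therefore $T^\ast=+\infty$ and \eqref{assymptotic10} holds for all $t\ge 0$.

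The points that need care are: the symmetrization and sign analysis of $I_2$ for general $k$, which is the step that genuinely uses the strict concavity (4) and the "confinement" numerology in (i)--(ii) --- the slack $\delta$ is precisely what keeps the threshold $\lambda/4$ from being saturated at $t=0$, so that the bootstrap can start; and the legitimacy of the formal manipulations \eqref{evolution}--\eqref{I1} together with the continuity of $t\mapsto\|w(\cdot,t)\|_{L^\infty}$, both of which rest on the smooth, strictly positive, compactly supported approximation scheme already invoked in the text. I expect the continuation argument plus the $k\to\infty$ passage to be the main (though fairly standard) obstacle; once $I_2\le 0$ is established, the rest is bookkeeping.
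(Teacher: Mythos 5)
Your proposal is correct and follows essentially the same route as the paper: the same decomposition \eqref{I1I2}, the unconditional sign of $I_1$ from \eqref{I1}, the symmetrization of $I_2$ using the oddness of $G'$, and the strict concavity of $G$ on $[-\lambda,\lambda]$ activated by the confinement hypotheses (i)--(ii). The only difference is organizational: you run a direct forward continuation argument (pointwise non-positivity of the $I_2$ integrand and hence non-increase of the $L^{2k}$ norms while $\|u-u_\infty\|_{L^\infty}<\lambda/4$, then $k\to\infty$), whereas the paper argues by contradiction at the first time the $L^{2k}$ norm increases; the two are logically equivalent, and your version arguably streamlines the bookkeeping in \eqref{14}--\eqref{14c}.
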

	
	\begin{proof}
		Assume by contradiction that there exists $t^{*}>0$ such that
		\[\|u(t^{*})-u_{\infty}\|_{L^{\infty}([0,1])} > \|u_{0}-u_{\infty}\|_{L^{\infty}([0,1])}.\] 
		Since the $L^{\infty}([0,1])$-norm is the $k\rightarrow +\infty$ limit of $L^{2k}([0,1])$-norm, then for $k \gg 1$, 
		
		\begin{equation}
		\label{assymptotic11}
		\|u(\cdot,t^{*})-u_{\infty}\|_{L^{2k}([0,1])} > \|u_0-u_{\infty}\|_{L^{2k}([0,1])}.
		\end{equation}
		Now, for such values of $k$, let 
		\[\bar{t} = \inf\left\{t\geq 0\quad\text{s.t.}\quad\frac{d}{dt}\|u(.,t)-u_{\infty} \|_{L^{2k}} > 0 \right\}.\] Due to \eqref{assymptotic11}, $\bar{t}<+\infty$. As a consequence, we get
		
		\begin{equation}
		\label{assymptotic12}
		\frac{d}{dt}\|u(.,\bar{t})-u_{\infty}\|_{L^{2k}([0,1])}^{2k} \geq 0.
		\end{equation}
		By \eqref{I1I2} and from $I_1\leq 0$ we get  
		
		\begin{equation*}
		\begin{aligned}
		& \frac{d}{dt}\|u(z,\bar{t})-u_{\infty}(z)\|_{L^{2k}([0,1])}^{2k} \leq 
		\\
		&\qquad\qquad 2k \int\int [u(z,\bar{t})-u_{\infty}(z)]^{2k-1}\big[G^{\prime}\big(u(z,\bar{t})-u(\xi,\bar{t})\big) - G^{\prime}\big(u_{\infty}(z)-u_{\infty}(\xi)\big)\big]dzd\xi
		\\
		&\qquad\qquad := I_{2}\big|_{t=\bar{t}}.
		\end{aligned}
		\end{equation*}
		Changing the role of $\xi$ and $z$ in $I_{2}\big|_{t=\bar{t}}$, we get
		
		\begin{equation*}
		\begin{aligned}
		I_{2}\big|_{t=\bar{t}} & = - 2k \int\int [u(\xi,\bar{t})-u_{\infty}(\xi)]^{2k-1}\big[G^{\prime}\big(u(z,\bar{t})-u(\xi,\bar{t})\big)  - G^{\prime}\big(u_{\infty}(z)-u_{\infty}(\xi)\big)\big]dzd\xi,
		\end{aligned}
		\end{equation*}
		so we have 
		
		\begin{equation*}
		\begin{aligned}
		I_{2}\big|_{t=\bar{t}} & = k \int\int \Big( [u(z,\bar{t})-u_{\infty}(z)]^{2k-1} - [u(\xi,\bar{t})-u_{\infty}(\xi)]^{2k-1} \Big) 
		\\
		& \qquad \Big(G^{\prime}\big(u(z,\bar{t})-u(\xi,\bar{t})\big) - G^{\prime}\big(u_{\infty}(z)-u_{\infty}(\xi)\big)\Big)dzd\xi
		\\
		& = k \int\int \Big( [u(z,\bar{t})-u_{\infty}(z)]^{2k-1} - [u(\xi,\bar{t})-u_{\infty}(\xi)]^{2k-1} \Big)
		\\
		& \qquad \Big( [u(z,\bar{t})-u_{\infty}(z)] - [u(\xi,\bar{t})-u_{\infty}(\xi)] \Big)
		\\
		& \qquad \Bigg( \frac{\big[G^{\prime}\big(u(z,\bar{t})-u(\xi,\bar{t})\big)-G^{\prime}\big(u_{\infty}(z)-u_{\infty}(\xi)\big)\big]}{[u(z,\bar{t})-u_{\infty}(z)] - [u(\xi,\bar{t})-u_{\infty}(\xi)]} \Bigg)dzd\xi.
		\end{aligned}
		\end{equation*}
		It follows from \eqref{assymptotic12} and $I_{1}\leq0$ that $I_{2}\geq0$. As a consequence, for some $\xi,$ $z\in[0,1]$, we get
		
		\begin{equation}
		\frac{\big[G^{\prime}\big(u(z,\bar{t})-u(\xi,\bar{t})\big)-G^{\prime}\big(u_{\infty}(z)-u_{\infty}(\xi)\big)\big]}{[u(z,\bar{t})-u_{\infty}(z)] - [u(\xi,\bar{t})-u_{\infty}(\xi)]} \geq 0.
		\end{equation}
		Which implies that there exists $\xi,$ $z$ s.t. $|u(z,\bar{t})-u(\xi,\bar{t})|>\lambda$ and this is because $G$ is strictly concave on $[-\lambda,\lambda]$. This gives 
		
		\begin{equation}
		\label{14}
		\|u(\bar{t})\|_{L^{\infty}([0,1])} \geq \frac{1}{2}\big|u(z,\bar{t})-u(\xi,\bar{t})\big| > \frac{\lambda}{2}.
		\end{equation}
		So using (i) we obtain the following strict inequality
		
		\begin{equation}
		\label{14a}
		\|u(\bar{t}) - u_{\infty}\|_{L^{\infty}([0,1])} > \frac{\lambda}{4} + \delta.
		\end{equation}
		On the other hand, since $\bar{t}$ is the infimum of all times in which $\|u(t) - u_{\infty}\|_{L^{2k}([0,1])}$ starts increasing, we have by (ii),
		
		\begin{equation}
		\label{14b}
		\|u(\bar{t}) - u_{\infty}\|_{L^{2k}([0,1])} \leq \frac{\lambda}{4} - \delta.
		\end{equation}
		Finally, since \eqref{14b} holds for $k\gg 1$ and we know that $\lim_{k\rightarrow+\infty} \|u(\bar{t}) - u_{\infty}\|_{L^{2k}([0,1])} = \|u(\bar{t}) - u_{\infty}\|_{L^{\infty}([0,1])}$, we obtain
		
		\begin{equation}
		\label{14c}
		\|u(\bar{t}) - u_{\infty}\|_{L^{\infty}([0,1])} \leq \frac{\lambda}{4} - \delta.
		\end{equation}
		which is a contradiction with \eqref{14a}.
	\end{proof}
	
	We are now ready to prove our main result of local asymptotic stability of steady states. 
	
	\begin{teo}\label{mainthm}
		Let $\rho(x,t)$ be the solution to \eqref{assymptotic1} having initial density $\rho_{0}\in L^{2}(\R)\cap L_{+}^{1}(\R)$ with unit mass and compact support. Let $\rho_{\infty}(x)$ be the stationary solution to \eqref{assymptotic1} with unit mass and same center of mass of $\rho_{0}$. Assume $G\in C^2$ and $G''<-c<0$ on the interval $[-\lambda,\lambda]$ for some constant $c>0$. Suppose that there exists $\frac{\lambda}{4}>\delta>0$ such that 
		\begin{itemize}
			\item [(i)] The support of $\rho_\infty$ is contained in $[-\frac{\lambda}{4} +\delta,\frac{\lambda}{4}-\delta]$,
			\item [(ii)] $W_\infty(\rho_0,\rho_\infty)\leq \frac{\lambda}{4}-\delta$.
		\end{itemize}
		Then,
		\begin{equation}
		\label{assymptotic15}
		W_{2}(\rho(t),\rho_{\infty}) \leq W_{2}(\rho(0),\rho_{\infty}) e^{-ct},
		\end{equation}
		for all $t \geq 0$.
	\end{teo}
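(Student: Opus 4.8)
The plan is to run a Grönwall argument on the squared $2$-Wasserstein distance written in pseudo-inverse coordinates, using Proposition \ref{T1} to keep all the relevant position differences inside the concavity window $[-\lambda,\lambda]$ of $G$ for all time. First I would reduce to the normalization of the section: since the center of mass is conserved along \eqref{assymptotic1} (the kernel $G'$ is odd), we may translate so that $\rho_0$ and $\rho_\infty$ both have zero center of mass, and then $\rho_\infty$ is exactly the stationary state appearing in Proposition \ref{T1}, with pseudo-inverse $u_\infty$. Next I would rephrase the hypotheses in pseudo-inverse language: by \eqref{assymptotic8} with $p=\infty$ one has $W_\infty(\rho_0,\rho_\infty)=\|u_0-u_\infty\|_{L^\infty([0,1])}$, so (ii) is precisely hypothesis (ii) of Proposition \ref{T1}; and the support condition (i), $\mathrm{supp}\,\rho_\infty\subseteq[-\tfrac{\lambda}{4}+\delta,\tfrac{\lambda}{4}-\delta]$, translates into $\|u_\infty\|_{L^\infty([0,1])}\le\tfrac{\lambda}{4}-\delta$, which gives hypothesis (i) of Proposition \ref{T1} (replacing $\delta$ by a slightly smaller value if one insists on the strict inequality there).

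With the hypotheses of Proposition \ref{T1} verified, that proposition gives $\|u(\cdot,t)-u_\infty\|_{L^\infty([0,1])}\le\|u_0-u_\infty\|_{L^\infty([0,1])}\le\tfrac{\lambda}{4}-\delta$ for all $t\ge0$. Combined with $\|u_\infty\|_{L^\infty([0,1])}\le\tfrac{\lambda}{4}-\delta$, this forces $\|u(\cdot,t)\|_{L^\infty([0,1])}<\tfrac{\lambda}{2}$, hence for every $z,\xi\in[0,1]$ and every $t\ge0$ the quantities $H(z,\xi;t)=u(z,t)-u(\xi,t)$ and $K(z,\xi)=u_\infty(z)-u_\infty(\xi)$ both lie in $(-\lambda,\lambda)$, where $G''<-c<0$. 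This is the essential gain from Proposition \ref{T1}: the nonlocal term acts, for all time, only through the strictly concave part of $G$.

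Now I would differentiate the squared distance using \eqref{I1I2}, \eqref{I2}, and \eqref{equality} with $k=1$ (the regularity and compact support needed to justify this are provided by the approximation scheme recalled before Proposition \ref{T1}). The diffusive term $I_1$ is $\le 0$ by the computation in \eqref{I1}. For $I_2$, the identity \eqref{I2} gives $I_2=\int_0^1\int_0^1 (H-K)\,[G'(H)-G'(K)]\,d\xi\,dz$; since $H,K\in(-\lambda,\lambda)$, the mean value theorem yields $\theta$ between $H$ and $K$, so $\theta\in(-\lambda,\lambda)$, with $G'(H)-G'(K)=G''(\theta)(H-K)$, whence $(H-K)[G'(H)-G'(K)]=G''(\theta)(H-K)^2\le -c\,(H-K)^2$. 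Integrating and invoking \eqref{equality} gives $I_2\le -2c\int_0^1[u(z,t)-u_\infty(z)]^2\,dz=-2c\,W_2(\rho(t),\rho_\infty)^2$. Hence $\frac{d}{dt}W_2(\rho(t),\rho_\infty)^2\le -2c\,W_2(\rho(t),\rho_\infty)^2$, and Grönwall's inequality together with $W_2(\rho(t),\rho_\infty)=\|u(\cdot,t)-u_\infty\|_{L^2([0,1])}$ yields \eqref{assymptotic15}.

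The main obstacle is not the closing Grönwall step, which is routine once the concavity window is under control, but precisely securing that control uniformly in time, which is the content of Proposition \ref{T1}; at the technical level, the only other delicate point is legitimizing the differentiation of the Wasserstein distance along the flow through the approximation argument (strictly positive smooth bounded densities with a zero-flux condition on a large interval), for which I would refer to the references cited at the start of the section.
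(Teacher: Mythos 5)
Your proposal is correct and follows essentially the same route as the paper: the decomposition into $I_1\le 0$ plus the nonlocal term $I_2$, the use of Proposition \ref{T1} to confine $H$ and $K$ to the concavity window $[-\lambda,\lambda]$ (with the same bounds $|K|<\tfrac{\lambda}{2}-2\delta$ and $|H|<\lambda-4\delta$), the difference-quotient bound $\frac{G'(H)-G'(K)}{H-K}<-c$, the identity \eqref{equality}, and Gr\"onwall. The only additions are bookkeeping the paper leaves implicit (the center-of-mass normalization and the harmless adjustment of $\delta$ to match the strict inequality in Proposition \ref{T1}).
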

	
	\begin{proof}
		In view of \eqref{I1I2}, \eqref{I1}, and \eqref{I2}, we have
		
		\begin{equation}
		\label{assymptotic16}
		\frac{d}{dt} \| u(z,t)-u_{\infty}(z)\|_{L^{2}}^{2} \leq \int_{0}^{1}\int_{0}^{1} [H-K] \big[ G^{\prime}(H) - G^{\prime}(K) \big] d\xi dz.
		\end{equation}
		Now by condition (i) we have that $|K|<\frac{\lambda}{2}-2\delta$. Moreover, the condition (ii) and the result in Proposition \ref{T1} imply $|H|\leq2\|u(\cdot,t)-u_{\infty}(\cdot)\|_{L^{\infty}} + |K|<\lambda - 4\delta$. This guarantees that $\frac{G^{\prime}(H) - G^{\prime}(K)}{H-K} < - c $ for some constant $c>0$ due to the strict concavity of $G$ on $[-\lambda,\lambda]$. Thus, using \eqref{equality}, \eqref{assymptotic16} becomes
		
		\begin{equation}
		\label{assymptotic17}
		\begin{aligned}
		\frac{d}{dt} \| u(z,t)-u_{\infty}(z)\|_{L^{2}}^{2} & \leq -c \int_{0}^{1}\int_{0}^{1} [H-K]^{2} d\xi dz 
		\\
		& =  - 2c \int_{0}^{1} [u(z,t)-u_{\infty}(z)]^2 dz.
		\end{aligned}
		\end{equation}
		Then the assertion follows by the Gronwall lemma.
	\end{proof}
	
	\section{Large time decay}\label{sec:arguments}
	
	The nonexistence of a nontrivial steady state for $\varepsilon\geq \|G\|_{L^1}$ is reasonably seen as a consequence of a stronger impact of the diffusion term on the dynamics of \eqref{assymptotic1} compared to the case $0<\varepsilon<\|G\|_{L^1}$. For this reason, in the case $\varepsilon\geq \|G\|_{L^1}$ we expect solutions to decay to zero for large times in a diffusive fashion, similarly to what happens to the solution to the Cauchy problem of the porous medium equation, see \cite{vazquez}. This is one of the goals of this subsection.
	
	On the other hand, there is another question which naturally arises in the case $0<\varepsilon<\|G\|_{L^1}$, that is whether or not solutions \emph{may exhibit a diffusive behavior also in the case $0<\varepsilon<\|G\|_{L^1}$}. As specified in the introduction, this is a difficult question even in classical problems such as the (modified) Keller-Segel system with linear diffusion, see \cite{bedrossian2014}. As we will show later on, numerical simulations suggest that solutions may decay to zero provided the initial condition is "spread" enough, i.e. has large enough second moment. In this section we will produce some incomplete mathematical arguments that support such a conjecture.
	
	We start by proving the decay of the solution to equation \eqref{assymptotic1} in case $\varepsilon \geq \|G\|_{L^1}$.
	
	\begin{teo}\label{T2}
		Let $G\in W^{2,\infty}(\R)\cap L^1(\R)$ with $G\geq 0$ and $G(x)=g(|x|)$ with $g'<0$ on $(0,+\infty)$. Let $\varepsilon\geq\|G\|_{L^1}$. Assume $\rho_0\in \mathcal{P}(\R)\cap L^2(\R)$ with unit mass. Then the solution $\rho(\cdot,t)$ to \eqref{assymptotic1} with $\rho_0$ as initial condition satisfies 
		\[\lim_{t\rightarrow +\infty}\rho(x,t)=0,\qquad \hbox{for a.e. $x\in \R$}.\]
	\end{teo}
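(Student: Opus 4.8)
The plan is to exploit the fact that \eqref{assymptotic1} is the $2$-Wasserstein gradient flow of the energy $E$ from \eqref{energy_intro}, which in the regime $\varepsilon\geq\|G\|_{L^1}=1$ is a coercive Lyapunov functional. Along the flow, $\frac{d}{dt}E[\rho(t)]=-\int_{\R}\rho\,|\partial_x(\varepsilon\rho-G*\rho)|^2\,dx\leq 0$. Two applications of Young's convolution inequality give $\int_{\R}\rho\,(G*\rho)\,dx\leq\|G*\rho\|_{L^\infty}\leq\|G\|_{L^\infty}$ and $\int_{\R}\rho\,(G*\rho)\,dx\leq\|G\|_{L^1}\|\rho\|_{L^2}^2=\|\rho\|_{L^2}^2$. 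The first yields the uniform-in-time bound $\varepsilon\|\rho(t)\|_{L^2}^2\leq 2E[\rho_0]+\|G\|_{L^\infty}$, which in particular rules out concentration; the second yields $E[\rho(t)]\geq\frac{\varepsilon-1}{2}\|\rho(t)\|_{L^2}^2\geq 0$, so $E[\rho(t)]$ decreases to a finite limit $E_\infty\geq 0$ and $\int_0^{\infty}\!\!\int_{\R}\rho\,|\partial_x(\varepsilon\rho-G*\rho)|^2\,dx\,dt<+\infty$.

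The second ingredient is the monotone growth of the second moment $m_2(t):=\int_{\R}x^2\rho(x,t)\,dx$. Testing \eqref{assymptotic1} against $x^2$ (equivalently, using \eqref{assymptotic9}, and justified by the usual approximation) one obtains
\[
\frac{d}{dt}m_2(t)=\varepsilon\,\|\rho(t)\|_{L^2}^2+\iint_{\R\times\R}(x-y)\,G'(x-y)\,\rho(x,t)\,\rho(y,t)\,dx\,dy .
\]
Here $sG'(s)\leq0$ for all $s$ because $g$ is nonincreasing, so the interaction term is $\leq0$; on the other hand, the kernel $\Phi(s):=sG'(s)$ is even with $\|\Phi\|_{L^1}=\int_{\R}|s|\,|G'(s)|\,ds=2\int_0^{\infty}r\,(-g'(r))\,dr=2\int_0^{\infty}g(r)\,dr=\|G\|_{L^1}=1$ (integrating by parts and using that $r\,g(r)\to0$, which holds since $g\geq0$ is nonincreasing and integrable), so the interaction term equals $\int_{\R}\rho\,(\Phi*\rho)\,dx$ and is hence bounded below by $-\|\Phi\|_{L^1}\|\rho(t)\|_{L^2}^2=-\|\rho(t)\|_{L^2}^2$. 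Therefore $\frac{d}{dt}m_2(t)\geq(\varepsilon-1)\|\rho(t)\|_{L^2}^2\geq0$, and $m_2(t)$ increases to some $L\in(0,+\infty]$.

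Next I would show that $L=+\infty$, i.e. that the mass disperses. For $\varepsilon>1$ this follows from a Nash-type inequality: there is an absolute $c>0$ with $\|f\|_{L^2}^2\geq c\,m_2(f)^{-1/2}$ for every probability density $f$ on $\R$ (split $f=f\,\mathbf{1}_{\{|x|\leq R\}}+f\,\mathbf{1}_{\{|x|>R\}}$, bound the first summand by Cauchy--Schwarz and the second by $R^{-2}m_2(f)$, then optimize in $R$). Indeed, if $L<+\infty$ then $\|\rho(t)\|_{L^2}^2\geq c\,L^{-1/2}>0$ for all $t$, so $\frac{d}{dt}m_2(t)\geq(\varepsilon-1)c\,L^{-1/2}>0$ for all $t$, contradicting $m_2(t)\leq L$. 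In the critical case $\varepsilon=\|G\|_{L^1}$ this lower bound degenerates, and I would instead argue by compactness: were $m_2$ bounded, $\{\rho(\cdot,t)\}_{t\geq0}$ would be tight and bounded in $L^2$, hence narrowly precompact, and the finiteness of the dissipation together with the gradient-flow structure would force every narrow limit point to be a stationary solution of \eqref{assymptotic1} with unit mass; since no nontrivial stationary solution exists when $\varepsilon\geq\|G\|_{L^1}$ (see \cite{Bedrossian20111927,BurDiFFra12}), this is impossible, so $L=+\infty$ in this case too.

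Finally, I would convert dispersion of mass into a.e. pointwise decay. Using the gradient-flow structure, the integrability of the dissipation, and the non-existence of nontrivial steady states, one obtains that $\rho(\cdot,t)\rightharpoonup0$ vaguely, i.e. $\int_K\rho(x,t)\,dx\to0$ for every compact $K\subset\R$. To upgrade this to a pointwise statement I would appeal to uniform-in-time regularity for \eqref{assymptotic1}: since $G'*\rho$ and $G''*\rho$ are bounded by $\|G\|_{W^{2,\infty}}$ and $\|\rho(t)\|_{L^2}$ is uniformly bounded, a Moser iteration gives a uniform bound for $\|\rho(\cdot,t)\|_{L^\infty}$ on $[1,+\infty)$, and the degenerate (porous-medium-type) parabolic structure provides a uniform local modulus of continuity; thus $\{\rho(\cdot,t)\}_{t\geq1}$ is precompact in $C_{\mathrm{loc}}(\R)$, and vague convergence to $0$ improves to locally uniform convergence, whence $\rho(x,t)\to0$ for every, hence a.e., $x\in\R$. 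I expect the genuine difficulties to lie in this last part: proving the uniform-in-time $L^\infty$ and Hölder estimates (so the parabolic regularity does not deteriorate as $t\to+\infty$) and, in the critical case $\varepsilon=\|G\|_{L^1}$, rigorously identifying the $\omega$-limit set with the empty set of nontrivial unit-mass steady states, which requires controlling the loss of mass at infinity in the semicontinuity of $E$ and of the dissipation.
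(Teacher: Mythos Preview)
Your proposal takes a genuinely different route from the paper.  The paper never touches the second moment; instead it extracts compactness directly from the dissipation.  Writing
\[
I[\rho]=\int_{\R}\rho\,\bigl|\partial_x(\varepsilon\rho-G*\rho)\bigr|^{2}\,dx,
\]
the paper expands the square and, after one integration by parts on the cross term and using $\|G''\|_{L^\infty}<\infty$ together with the uniform $L^2$ bound you already derived, obtains
\[
\varepsilon^{2}\int_{\R}\rho\,\rho_x^{2}\,dx
=\tfrac{4\varepsilon^{2}}{9}\int_{\R}\bigl(\partial_x\rho^{3/2}\bigr)^{2}\,dx
\;\leq\; I[\rho]+C .
\]
Since $\int_0^{\infty}I[\rho(\tau)]\,d\tau<\infty$, along a sequence $t_k\to\infty$ with $I[\rho(t_k)]\to 0$ the family $\rho(t_k)^{3/2}$ is bounded in $H^{1}(\R)$; a Nash inequality gives a uniform $L^{3}$ bound, and Rellich then yields $\rho(t_k)\to\tilde\rho$ strongly in $L^{2}_{\mathrm{loc}}$ and a.e.  Lower semicontinuity of $I$ on bounded intervals gives $I[\tilde\rho]=0$, so $\tilde\rho$ is a stationary solution, hence $\tilde\rho\equiv0$ by the nonexistence result for $\varepsilon\geq\|G\|_{L^{1}}$.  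This handles both the supercritical and the critical case uniformly and requires no Moser iteration, no H\"older theory for degenerate parabolic equations, and no separate tightness discussion.

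Your second-moment computation is correct and appealing (and indeed the paper carries out the same identity later for heuristic purposes), but notice that it does not by itself deliver the conclusion: $m_2(t)\to\infty$ does \emph{not} imply $\int_K\rho(\cdot,t)\to0$ for compact $K$ (half the mass could sit on $[-1,1]$ while the other half wanders off), so you still need the ``dissipation $+$ no-steady-state'' mechanism to identify local limits.  At that point the second-moment detour has bought nothing.  Moreover, in the critical case your narrow-compactness argument has a gap: narrow convergence alone is too weak to pass to the limit in $I[\rho]$ and conclude that limit points are stationary; one needs precisely the strong $L^{2}_{\mathrm{loc}}$ convergence that the $H^{1}$ bound on $\rho^{3/2}$ provides.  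The same bound is what lets the paper bypass entirely the uniform-in-time $L^{\infty}$/H\"older estimates you flag as the hard part of your plan.  So the missing idea in your outline is simply to read off $\|\rho^{3/2}\|_{H^1}$ from the entropy dissipation itself.
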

	
	\begin{proof}
		The time derivative of $E[\rho(\cdot,t)]$ is given by
		
		\begin{equation}
		\frac{d}{dt}E[\rho(\cdot,t)] = -\int \rho |\partial_{x}(\varepsilon\rho - G*\rho)|^{2} dx := -I[\rho(\cdot,t)],
		\end{equation}
		which gives
		\begin{equation}
		\int_0^{t} I[\rho(\cdot,\tau)]\,d\tau = E[\rho_0]-E[\rho(\cdot,t)]\leq \frac{\varepsilon}{2}\int \rho_0^2\, dx + \frac{1}{2}\|G\|_{L^\infty}.
		\end{equation}
		The above implies 
		\[\int_0^{+\infty} I[\rho(\cdot,\tau)]\,d\tau<+\infty.\]
		Therefore, up to a subsequence $\rho_{k} := \rho(t_{k})$ we know that $I[\rho_{k}] \rightarrow 0$ as $k \rightarrow +\infty.$ This means that there exists some constant $C_{1} > 0$ s.t. $I[\rho_{k}]\leq C_{1}.$ Expanding the integral $I[\rho]$, using $\rho\in \mathcal{P}(\R)$, and integrating by parts we get
		
		\begin{align}
		& I[\rho] = \varepsilon^2\int\rho\rho_{x}^{2}dx + \int\rho(G^{\prime}*\rho)^{2}dx + \varepsilon\int\rho^2G''*\rho dx\nonumber\\
		& \geq \varepsilon^2\int\rho\rho_{x}^{2}\, dx -\varepsilon\|G''\|_{L^\infty(\R)}\int\rho^2 \, dx\nonumber\\
		& \geq \varepsilon^2\int\rho\rho_{x}^{2}\, dx -\|G''\|_{L^\infty(\R)}\left(2E[\rho_0]+\|G\|_{L^\infty(\R)}\right).\label{entropy1}
		\end{align} 
		The last inequality follows from the fact that $\frac{d}{dt}E\leq0$ which implies that $\varepsilon\int\rho^{2}\leq 2E[\rho]+\|G\|_{L^{\infty}}\leq 2E[\rho_{0}] + \|G\|_{L^{\infty}}$. 
		From the assumptions on $G$, \eqref{entropy1} implies that there exists a constant $C$ such that
		
		\begin{equation}
		\int (\partial_{x}(\rho_{k}^{\frac{3}{2}}))^{2}dx \leq C
		\end{equation}
		uniformly w.r.t. $k$. Using Nash inequality in one space dimension, namely
		\begin{equation}
		\label{nash}
		\|f\|_{L^{2}}^{3} \leq \widetilde{C} \|f\|_{L^{1}}^{2} \|f_{x}\|_{L^{2}},
		\end{equation}
		with $f = \rho_{k}^{\frac{3}{2}}$ in \eqref{nash}, and using standard $L^p$ interpolation inequalities we obtain
		
		\begin{equation}
		\Bigg( \int \rho_{k}^{3} dx \Bigg)^{2} \leq C_{2}
		\end{equation}
		uniformly w.r.t. $k$ for some constant $C_{2}>0$. This means that $\rho_{k}^{\frac{3}{2}}$ is uniformly bounded in $H^{1}.$ By Sobolev embedding, up to a subsequence $\rho_{k}^{\frac{3}{2}} \rightarrow \widetilde{\rho}^{\frac{3}{2}}$ in $L_{loc}^{2}$ and so $\rho_{k}$ converges $a.e$ to $\widetilde{\rho}.$ Since $I[\rho_{k}] \rightarrow 0$ as $k \rightarrow +\infty$ we have that $\liminf_{k\rightarrow+\infty}I[\rho_{k}] = 0.$ Now, since $\rho_k^{3/2}$ converges to $\tilde{\rho}^{3/2}$ weakly in $H^1$ and $\rho_k$ converges strongly in $L^3$ to $\tilde{\rho}$ on compact intervals, for an arbitrary interval $\tilde{I}=[-R,R]$ we have
		\begin{align*}
		& \frac{4 \varepsilon^2}{9}\int_{\tilde{I}} (\partial_x(\tilde{\rho}^{3/2}))^2\, dx + \int_{\tilde{I}}\tilde{\rho}(G'*\tilde{\rho})^2\, dx -2\varepsilon\int_{\tilde{I}}\tilde{\rho}\tilde{\rho}_xG'*\tilde{\rho}\, dx \\
		& \ \leq \liminf_{k\rightarrow +\infty} \left(\frac{4 \varepsilon^2}{9}\int_{\tilde{I}} (\partial_x(\tilde{\rho}_k^{3/2}))^2\, dx + \int_{\tilde{I}}\rho_k(G'*\rho_k)^2\, dx -2\varepsilon\int_{\tilde{I}}\rho_k (\rho_k)_x G'*\rho_k\, dx\right)\\
		& \ = \liminf_{k\rightarrow +\infty} \int_{\tilde{I}} \rho_k\left|\partial_x (\varepsilon \rho_k - G*\rho_k)\right|^2\, dx = \liminf_{k\rightarrow +\infty} I[\rho_k] = 0.
		\end{align*}
		This shows that $I[\widetilde{\rho}] = 0$, which implies that $\int \tilde{\rho}(\varepsilon\tilde{\rho} - G*\tilde{\rho})_{x}^{2} dx = 0$ and so $(\varepsilon\tilde{\rho} - G*\tilde{\rho})_{x} = 0$ on the support of $\tilde{\rho}$. This means that $\tilde{\rho}$ is a steady state. Since we are in the case $\varepsilon \geq \|G\|_{L^1}$ where we have no steady state but zero (see \cite{BurDiFFra12}), we have $\widetilde{\rho} = 0$. As a trivial consequence of the above procedure, every family $\rho(\cdot,t_k)$ with $t_k\rightarrow +\infty$ has a subsequence that converges to zero almost everywhere w.r.t. $x\in\R$. By a.e. uniqueness of the a.e. limit, the whole family $\{\rho(\cdot,t)\}_{t\geq 0}$ converges to zero as $t\rightarrow +\infty$.
	\end{proof}
	
	We now focus on the case $0<\varepsilon<\|G\|_{L^1}$. We consider two arguments suggesting that the large time decay holds also in this case for suitable initial conditions. 
	
	It is well known that the one dimensional porous medium equation
	\[\rho_t =\frac{1}{2}(\rho^2)_{xx}\]
	can be approximated as $N\rightarrow +\infty$ by the empirical measure of the $N$-particle system
	\begin{equation}
	\begin{cases}
	\dot{X}_i(t)=-\frac{N}{2}\left[\left(\frac{1}{X_{i+1}(t)-X_i(t)}\right)^2-\left(\frac{1}{X_{i}(t)-X_{i-1}(t)}\right)^2\right] & i=2,\ldots,N-1,\\
	\dot{X}_1(t)=-\frac{N}{2}\left(\frac{1}{X_2(t)-X_1(t)}\right)^2, & \\
	\dot{X}_N(t)=\frac{N}{2}\left(\frac{1}{X_N(t)-X_{N-1}(t)}\right)^2, &
	\end{cases}
	\end{equation}
	see for instance \cite{gosse2006identification}. As a toy model for \eqref{assymptotic1}, we therefore consider the following two-particle model
	\begin{equation}
	\begin{cases}
	\label{ode3}
	\dot{X}_{1} = -2\varepsilon R_{1}^{2} + \frac{1}{2}G^{\prime}(X_{1} - X_{2})
	\\
	\dot{X}_{2} =  2\varepsilon R_{1}^{2} + \frac{1}{2}G^{\prime}(X_{2} - X_{1})
	\\[0.2cm]
	R_{1} = \frac{1}{2(X_{2} - X_{1})}
	\end{cases}
	\end{equation}
	Assuming the two particles occupy symmetric positions w.r.t. zero, i.e. $X_{1} = -X_{2} = -X,$ then \eqref{ode3} becomes the one ODE
	
	\begin{equation}\label{ODE2}
	\begin{cases}
	\dot{X}(t) = \frac{\varepsilon}{8X^{2}} + \frac{1}{2}G^{\prime}(2X)
	\\
	X(0) = X_{0}
	\end{cases}
	\end{equation}
	Assuming that $-G'$ is zero at $x=0$, it increases on an interval $[0,\ell]$ and it decreases to zero on $(\ell,+\infty)$, with fast enough decay (for instance exponentially), for small $\varepsilon$ it is easy to show the existence of two equilibrium points $a,b$ for the ODE in \eqref{ODE2}, $a$ stable and $b$ unstable. Hence, for the Cauchy problem \eqref{ODE2} one can show the following behavior:
	
	\begin{itemize}
		\item $X_{0}\in(b,+\infty)$ \qquad $\Longrightarrow\quad \lim_{t\rightarrow +\infty} X(t)=+\infty,$
		\item $X_{0}\in[0,b)$ \qquad $\Longrightarrow\quad\lim_{t\rightarrow +\infty} X(t)=a$.
	\end{itemize}
	This means that the behavior of the discrete density 
	\[\rho(t)=\frac{1}{X_2(t)-X_1(t)}=\frac{1}{2X(t)}\] 
	varies for different choices of the initial datum. If we take $X_{0} > b$ then $X(t)$ increases, which means that the support of the density $\rho(t)$ will increase to $+\infty$ as $t\rightarrow+\infty$ i.e. the density $\rho(t)$ decays to zero, see Figure \ref{figure_particles}.  
	
	\begin{figure}
		\begin{center}
			\resizebox{9cm}{5.2cm}{ \begin{tikzpicture}[
				scale=0.8, %
				axis/.style={thick, ->, >=stealth'},
				important line/.style={thick},
				dashed line/.style={dashed, thin},
				pile/.style={thick, ->, >=stealth', shorten <=2pt, shorten
					>=2pt},
				every node/.style={color=black}
				]
				
				\draw[axis] (-0.6,0) -- (8,0) node(xline)[right]  {$t$};
				\draw[axis] (0,-0.6) -- (0,6) node(yline)[above]  {$ X(t)$};
				\draw[thick,dashed] (-0.2,1.5) -- (8,1.5);
				\draw[thick,dashed] (-0.2,3.5) -- (8,3.5);
				
				\draw[blue,thick] (0,0.7) .. controls (2,0.9) and (5,1.1) .. (8,1.3);
				\draw[blue,thick] (0,2.5) .. controls (2,2.2) and (5,1.9) .. (8,1.7);
				\draw[green,thick] (0,4.5) .. controls (2,4.7) and (5,5.1) .. (8,5.4);
				
				
				\node[red] () at (-0.45,1.5) {$a$};
				\node[red] () at (-0.45,3.5) {$b$};
				
				\node () at (-0.55,0.7)  {$X_{0}$};
				\node () at (-0.55,2.5)  {$X_{0}$};
				\node () at (-0.55,4.5)  {$X_{0}$};
				
				\filldraw[red] (0,0.7) circle (2pt) node[anchor=west]{};
				\filldraw[red] (0,2.5) circle (2pt) node[anchor=west]{};
				\filldraw[red] (0,4.5) circle (2pt) node[anchor=west]{};
				
				\end{tikzpicture}}
		\end{center}
		\caption{Two different asymptotic behaviors of the solution depending on the initial datum.}\label{figure_particles}
	\end{figure}
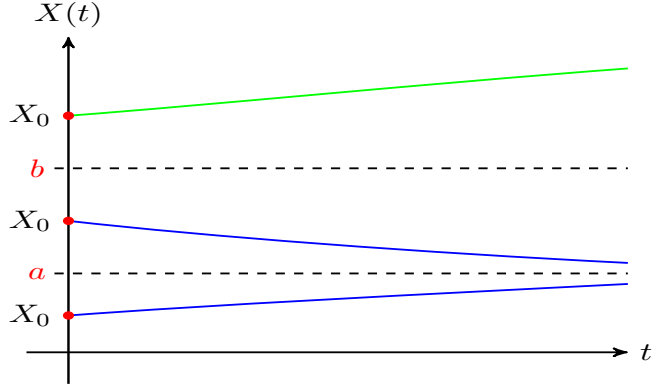
	
	Next we use another argument at the continuum level to show different behaviors of the solution to equation \eqref{assymptotic1} for different choices of initial data for short times. Consider equation \eqref{assymptotic1}, we compute the time derivative of the second moment
	\[M_2(t)=\int_\R |x|^2 \rho(x,t)\, dx\]
	as follows:
	\begin{equation}
	\label{2nd moment}
	\begin{aligned}
	\frac{d}{dt}M_2(t)&=\int_\R|x|^2 \rho_t\, dx = \int_\R|x|^2\left(\frac{\varepsilon}{2}(\rho^2)_{x}-\rho G'*\rho\right)_x\, dx\\
	& = \varepsilon\int_\R \rho(x,t)^2 \, dx +\iint_{\R\times \R}(x-y)G'(x-y)\rho(x,t)\rho(y,t)\, dy\,dx.
	\end{aligned}
	\end{equation}
	Then we consider the first initial datum
	\[\rho_R(x)=\begin{cases}
	\frac{1}{2R} & \hbox{if $-R\leq x\leq R$}\\
	0 & \hbox{otherwise}.
	\end{cases}\]
	By substituting the explicit initial condition $\rho_R$, after few computations we get
	\begin{align*}
	& \frac{d}{dt}M_2(t)\Big|_{t=0} = \frac{\varepsilon}{2R}+\frac{1}{4R^2}\iint_{[-R,R]\times [-R,R]}(x-y)G'(x-y)\, dx\, dy\\
	& \ \geq  \frac{\varepsilon}{2R}+\frac{1}{4R^2}\iint_{\{-2R\leq x-y\leq 2R\,,\,\,-2R\leq x\leq 2R\}}(x-y)G'(x-y)\, dx\, dy\\
	& \ = \frac{1}{2R}\left(\varepsilon +2\int_0^{2R} z G'(z)\, dz\right) =  \frac{1}{2R}\left(\varepsilon -\int_{-2R}^{2R} G(z)\, dz + 4R\,G(2R)\right).
	\end{align*}
	We recall that $zG'(z)\leq 0$ on $z\geq 0$. Assuming
	\[\varepsilon<\|G\|_{L^1}\]
    and that $G$ decays to zero at infinity faster than $1/R$,
	there exists a critical value $R_0>0$ such that
$0< R< R_0$ implies that second moment of the corresponding solution has a positive derivative at $t=0$. On the other hand, if $R>R_0$ then the time derivative of the second moment is negative. This heuristic computation recalls a behavior similar to the one described in the above two-particle toy model, in which the solution tends to stabilize around a confined equilibrium.

In order to obtain a situation fully similar to the one in the particle toy-model we need to find an initial condition with large second moment yielding an increasing second moment at the initial time. To perform this task we need to include \emph{oscillations} in the initial condition too. Let us consider the initial datum
	\begin{equation}
	\label{Initial}
	\rho_{\delta}(x)=\frac{2\delta}{
		\sqrt{\pi}(1+e^{-\frac{1}{\delta^2}})}e^{-(\delta x)^2}\cos^2(x), \qquad\delta>0.
	\end{equation}
We can prove that for $\delta>0$ small enough (hence, high second moment) and with Gaussian kernel the second moment of the corresponding solutions grows at $t=0$. In order to see that, we first compute the Fourier transform of the initial condition
\begin{align*}
 & \hat{\rho}_\delta(\xi)=\frac{1}{1+e^{-\frac{1}{\delta^2}}} e^{-\frac{-\pi^2 \xi^2}{\delta^2}}\left(e^{\frac{1}{\delta^2}}+\cosh\left(\frac{2\pi\xi}{\delta^2}\right)\right).
\end{align*}
Assuming that $G$ is the Gaussian kernel 
	\[G(x)=\frac{1}{\sqrt{\pi}}e^{-x^2},\]
   we compute
  \[\widehat{-x G'(x)}(\xi)=-e^{-\pi^2\xi^2}(2\pi^2\xi^2-1).\]
Using Plancherel's theorem we can rewrite \eqref{2nd moment} in terms of the Fourier transform as follows
	as the following
	\begin{equation}
	\label{2nd moment FT}
	\begin{aligned}
	\frac{d}{dt}M_2(t)\Big|_{t=0}& = \varepsilon\int_\R \rho_\delta(x,t)^2 \, dx +\iint_{\R\times \R}(x-y)G'(x-y)\rho_\delta(x,t)\rho_\delta(y,t)\, dy\,dx
	\\
	& = \int_\R\big(\varepsilon +e^{-\pi^2\xi^2}(2\pi^2\xi^2-1)\Big) \widehat{\rho}_\delta^2(\xi) dx.
	\end{aligned}
	\end{equation}
A tedious but simple computation yields
	\begin{equation}
	\label{2nd moment decay}
	\begin{aligned}
	\frac{d}{dt}M_2(t)\big|_{t=0}& = \frac{\varepsilon \delta}{2\sqrt{2\pi}}\Bigg( \frac{1}{(1+e^{\frac{1}{\delta^2}})^2}+ \frac{4e^{\frac{3}{2\delta^2}}}{(1+e^{\frac{1}{\delta^2}})^2}  + \frac{3e^{2\frac{1}{\delta^2}}}{(1+e^{\frac{1}{\delta^2}})^2} \Bigg)
	\\
	& +  \frac{\delta}{\sqrt{\pi}(\delta^2+2)^{\frac{3}{2}}}\Bigg( \bigg(\frac{4}{\delta^2+2} - 1\bigg)\cdot\frac{e^{\frac{2}{\delta^2+2}\cdot 2\frac{1}{\delta^2}}}{(1+e^{\frac{1}{\delta^2}})^2}  +  \bigg(\frac{4}{\delta^2+2} - 4\bigg)\cdot\frac{e^{(1+\frac{1}{\delta^2+2})\cdot\frac{1}{\delta^2}}}{(1+e^{\frac{1}{\delta^2}})^2}  
	\\
	& -  \frac{1}{(1+e^{\frac{1}{\delta^2}})^2} -  \frac{2e^{2\frac{1}{\delta^2}}}{(1+e^{\frac{1}{\delta^2}})^2}   \Bigg).
	\end{aligned}
	\end{equation} 
	For small $\delta$, \eqref{2nd moment decay} becomes
	\[\frac{d}{dt}M_2(t)\big|_{t=0} \cong \frac{3\varepsilon\delta}{\sqrt{8\pi}} - \frac{\delta}{\sqrt{8\pi}} = \frac{\delta}{\sqrt{8\pi}}(3\varepsilon - 1). \]
	We conclude that $\frac{d}{dt}M_{2}(t)|_{t=0}>0$  for small $\delta$ and for $\varepsilon>1/3$. We observe that such a range of $\epsilon$ implies the existence of non trivial steady states. We stress once again that this computation is merely heuristic. However, the very last simulation performed in the next section (see figure \ref{fig.Decay-Four-Figures}) suggests that such an initial condition produces a dominant repulsive effect. The fact that an oscillating behavior "empowers" the diffusive effects is reminiscent of the classical behavior of a linear reaction diffusion equation, in which highly oscillating initial conditions can compensate large reaction rates and produce a diffusive decay for large times.

	
	\section{Numerical Simulations}\label{sec:numerics}
	In this section we shall present three numerical simulations in which we validate our results about the convergence to the non trivial steady state, decay of the solution to zero in the diffusion-dominated regime $\varepsilon\geq\|G\|_{L{1}}$, and the growing of the second moment of the solution in the case of the special initial condition \eqref{Initial}. In the first two simulations we use two different methods, the finite volume method introduced in \cite{carrillo2015finite} and the particle method already sketched in the previous section. In the last simulation we use only the particle method.
	\\
	
	We begin with the finite volume method. We apply a $1$D positive preserving finite-volume method for \eqref{assymptotic1}, see the paper \cite{carrillo2015finite}. Divide the computational domain into finite-volume cells $U_{i}=[x_{i-\frac{1}{2}},x_{i+\frac{1}{2}}]$ of a uniform size $\Delta x$ with $x_{i} = i\Delta x$, $i \in\{-m,~\dots,m\}$. Let
	\begin{equation*}
	\overline{\rho}_{i}(t) = \frac{1}{\Delta x}\int_{U_{i}}\rho(x,t)dx,
	\end{equation*}
	denote the averages of the solution $\rho$ computed at each cell $U_{i}$. Then integrating equation \eqref{assymptotic1} over each cell $U_{i}$, we obtain a semi-discrete finite-volume scheme given by the following system of ODEs for $\overline{\rho}_{i}$
	
	\begin{equation}
	\label{ode1}
	\frac{d\overline{\rho}_{i}(t)}{dt} = -\frac{F_{i+\frac{1}{2}}(t) - F_{i-\frac{1}{2}}(t) }{\Delta x},
	\end{equation}
	where the numerical flux $F_{i+\frac{1}{2}}$ is an approximation for our continuous flux $-\rho(\varepsilon\rho - G*\rho)_{x}$. We obtain the following expression for $F_{i+\frac{1}{2}}$
	
	\begin{equation}
	F_{i+\frac{1}{2}} = \max (u_{i+1},0)\Big[ \overline{\rho}_{i} + \frac{\Delta x}{2}(\rho_{x})_{i} \Big] + \min (u_{i+1},0)\Big[ \overline{\rho}_{i} - \frac{\Delta x}{2}(\rho_{x})_{i} \Big]
	\end{equation}
	where 
	
	\begin{equation}
	u_{i+1} = \sum_{j}\overline{\rho}_{j}\big( G(x_{i+1} - x_{j}) - G(x_{i} - x_{j}) \big) - \frac{\varepsilon}{\Delta x} \big( \overline{\rho}_{i+1} - \overline{\rho}_{i} \big)
	\end{equation}
	and
	
	\begin{equation}
	(\rho_{x})_{i} = \mbox{minmod} \Bigg( 2\frac{\overline{\rho}_{i+1} - \overline{\rho}_{i}}{\Delta x},~ \frac{\overline{\rho}_{i+1} - \overline{\rho}_{i-1}}{2\Delta x},~ 2\frac{\overline{\rho}_{i} - \overline{\rho}_{i-1}}{\Delta x} \Bigg)
	\end{equation}
	where the minmod limiter is given by
	
	\begin{equation}
	\mbox{minmod}(a_{1}, a_{2},~\dots) :=
	\begin{cases}
	\min (a_{1}, a_{2},~\dots), \quad \mbox{if} ~ a_{i} > 0\quad \forall i
	\\
	\max (a_{1}, a_{2},~\dots), \quad \mbox{if} ~ a_{i} < 0\quad \forall i
	\\
	0, \qquad\qquad\qquad\mbox{otherwise.}
	\end{cases}
	\end{equation}
	Finally, we integrate the semi-discrete scheme \eqref{ode1}, which is a system of ODEs, numerically using the third-order strong preserving Runge-Kutta (SSP-RK) ODE solver used in \cite{gottlieb2001strong}. 
	\\
	
	The second method is a particle method in which we approximate the PDE \eqref{assymptotic1}, (see \cite{gosse2006identification}), by a system of $N$ particles $X_{1}(t),~\dots,X_{N}(t)$ with equal masses $m_{i}=\frac{1}{N}$
	
	\begin{equation}
	\label{ode2}
	\begin{cases}
	\dot{X}_{i} = \varepsilon N(R_{i-1}^{2} - R_{i}^{2}) + \frac{1}{N}\sum_{k\neq i}G^{\prime}(X_{i} - X_{k}),\quad i = 2,~\dots,~N-1
	\\
	\dot{X}_{1} = -\varepsilon N R_{1}^{2} + \frac{1}{N}\sum_{1<k}G^{\prime}(X_{1} - X_{k})
	\\
	\dot{X}_{N} = \varepsilon N R_{N-1}^{2} + \frac{1}{N}\sum_{k<N}G^{\prime}(X_{N} - X_{k})
	\end{cases}
	\end{equation}
	where
	
	\begin{equation}
	R_{i} = \frac{1}{N(X_{i+1} - X_{i})}.
	\end{equation}
	Then we solve the particle system \eqref{ode2} using the Runge-Kutta MATLAB solver ODE23. Note that the initial mesh sizes are automatically determined by the total number of particles $N$ and the initial density values. We take the initial positions $X(0) = X_{0} = (X_{0}^{1},~X_{0}^{2},~\dots,~X_{0}^{N})$ s.t.
	
	\begin{equation}
	\int_{X_{0}^{i}}^{X_{0}^{i+1}} \rho_{0} dX = \frac{1}{N-1} \qquad\qquad i = 1,~2,~\dots,N-1.
	\end{equation}
	\\
	
	We start now with the simulations of  \eqref{assymptotic1}. In order to show the several behaviors of solutions we always use the same aggregation kernel, that is
 \[ G(x)=\frac{1}{\sqrt{\pi}}e^{-x^2}.\]
   In the first simulation we take the steady state inside the interval where $G$ is concave. This will be ensured by choosing small enough $\epsilon$. Indeed, we recall that the support of the steady state is an increasing function of $\epsilon$, degenerating to a point particle when $\epsilon=0$, see \cite{BurDiFFra12}. More precisely, we choose the value $\varepsilon=0.002$, which guarantees that condition (i) in Theorem \ref{T1} is satisfied. Then we take the initial density in such a way that condition (ii) in Theorem \ref{T1} is also satisfied, and this explains the choices of the initial data in Figure \ref{fig.SteadyState}. Applying both the aforementioned methods (particle method and finite volume) we get the results presented in Figure \ref{fig.SteadyState}, which show convergence to the steady state.
	\begin{figure}[!htbp]
		\hspace*{-0.4cm}\includegraphics[width=12cm,height=8.5cm]{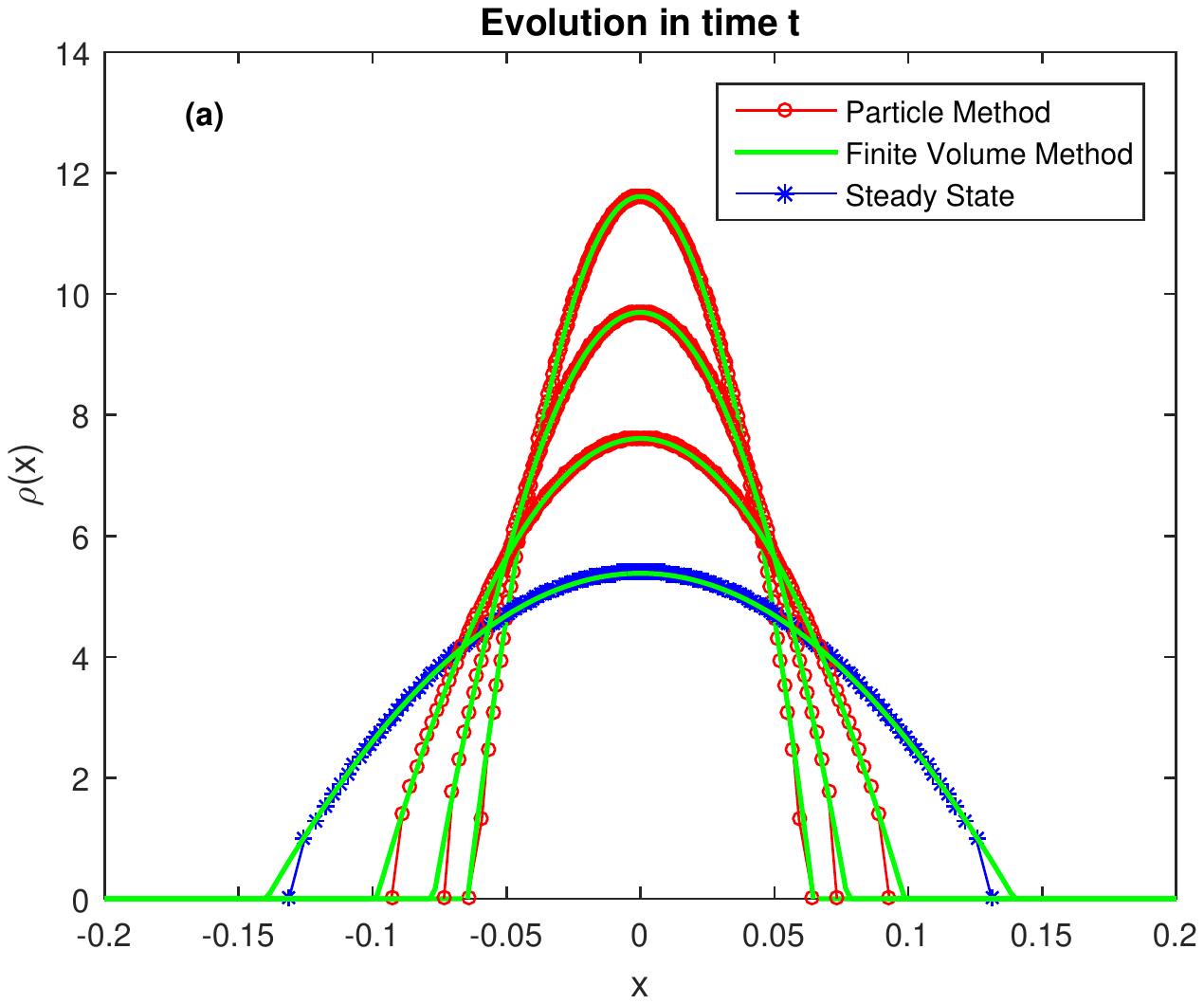}
		\includegraphics[width=12cm,height=8.5cm]{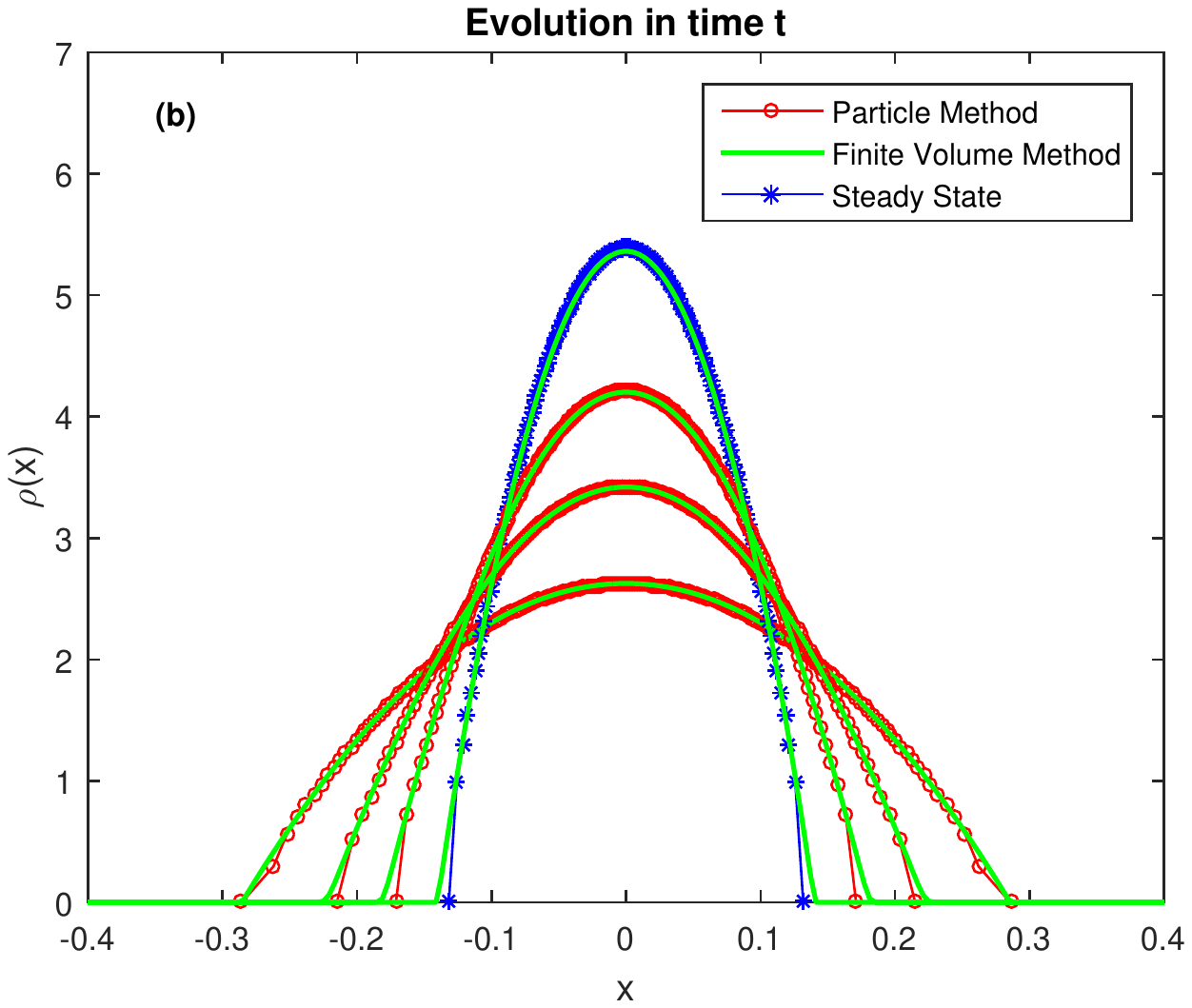}
		\caption{Convergence of the solution to the steady state. $\varepsilon=0.002,$ $G(x)=\frac{1}{\sqrt{\pi}}e^{-x^2}.$ \textbf{(a)} $\rho_{0}(x)= \frac{93}{8}(1-\frac{961}{4}x^2),$ \textbf{(b)} $\rho_{0}(x)= \frac{21}{8}(1-\frac{49}{4}x^2)$}
		\label{fig.SteadyState}
	\end{figure}
	\\
	
The second simulation presented in Figure \ref{fig.DecaytoZero2} shows the decay of the solution to \eqref{assymptotic1} to zero in the diffusion-dominated regime $\varepsilon\geq\|G\|_{L^{1}}$. In this simulation we use the same initial datum $\rho_{0}(x)= \frac{21}{8}(1-\frac{49}{4}x^2)$ as in the first simulation, and we take $\varepsilon=2$.
	\begin{figure}
		\begin{center}
			\includegraphics[width=12cm,height=8.5cm]{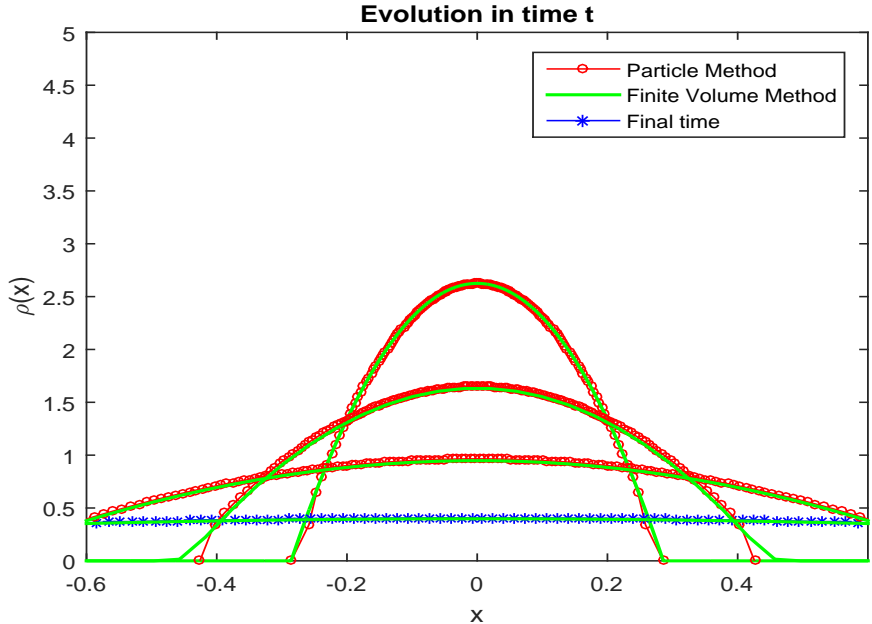}
		\end{center}
		\caption{Decay of the solution to zero in the diffusion-dominated regime $\varepsilon\geq\|G\|_{L^{1}}$. $\varepsilon=2,$ $G(x)=\frac{1}{\sqrt{\pi}}e^{-x^2},$ $\rho_{0}(x)= \frac{21}{8}(1-\frac{49}{4}x^2)$.}
		\label{fig.DecaytoZero2}
	\end{figure}
	\\
	
	Finally, we show that a multiple behavior is possible for a fixed $\epsilon<\|G\|_{L^1}$. More precisely, we consider $\epsilon=0.5$. In Figure \ref{fig.Decay-Four-Figures} we prescribe the initial condition $\rho_\delta$ introduced in \eqref{Initial}, which we proved to yield an initial growth of the second moment. The simulation in Figure \ref{fig.Decay-Four-Figures} suggests that the second moment grows for some time, and the solution takes the shape of many peaks interacting with each other. In figure \ref{fig.SteadyState2} we use the same $\epsilon$ and re-use the same initial data of our first simulation. Although the conditions of Theorem \ref{T1} are not met, we still get convergence towards the steady state. The last two simulations support our conjecture that a multiple behavior holds for \eqref{assymptotic1} in the aggregation-dominated regime, namely $0<\varepsilon<\|G\|_{L^1}$.
	\begin{figure}\label{fig:osc}
		\begin{center}
			\includegraphics[width=16.5cm,height=10cm]{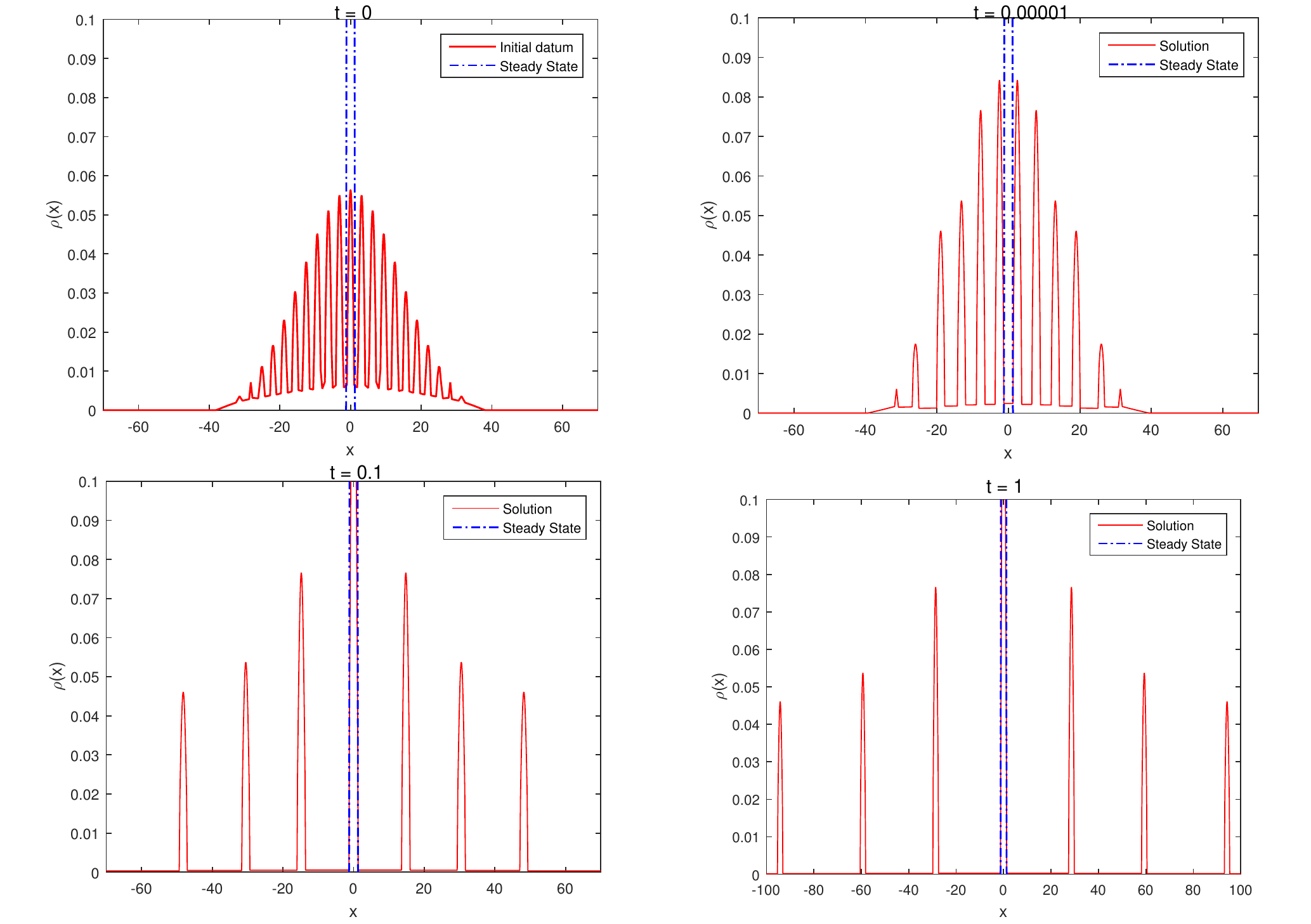}
		\end{center}
		\caption{Growing of the second moment of the solution to \eqref{assymptotic1} in time. $\varepsilon=0.5,$ $G(x)=\frac{1}{\sqrt{\pi}}e^{-x^2}$, and initial datum \eqref{Initial} with $\delta=0.05.$}
		\label{fig.Decay-Four-Figures}
	\end{figure}

    		\begin{figure}[!htbp]
		\hspace*{-0.4cm}\includegraphics[width=12cm,height=8.5cm]{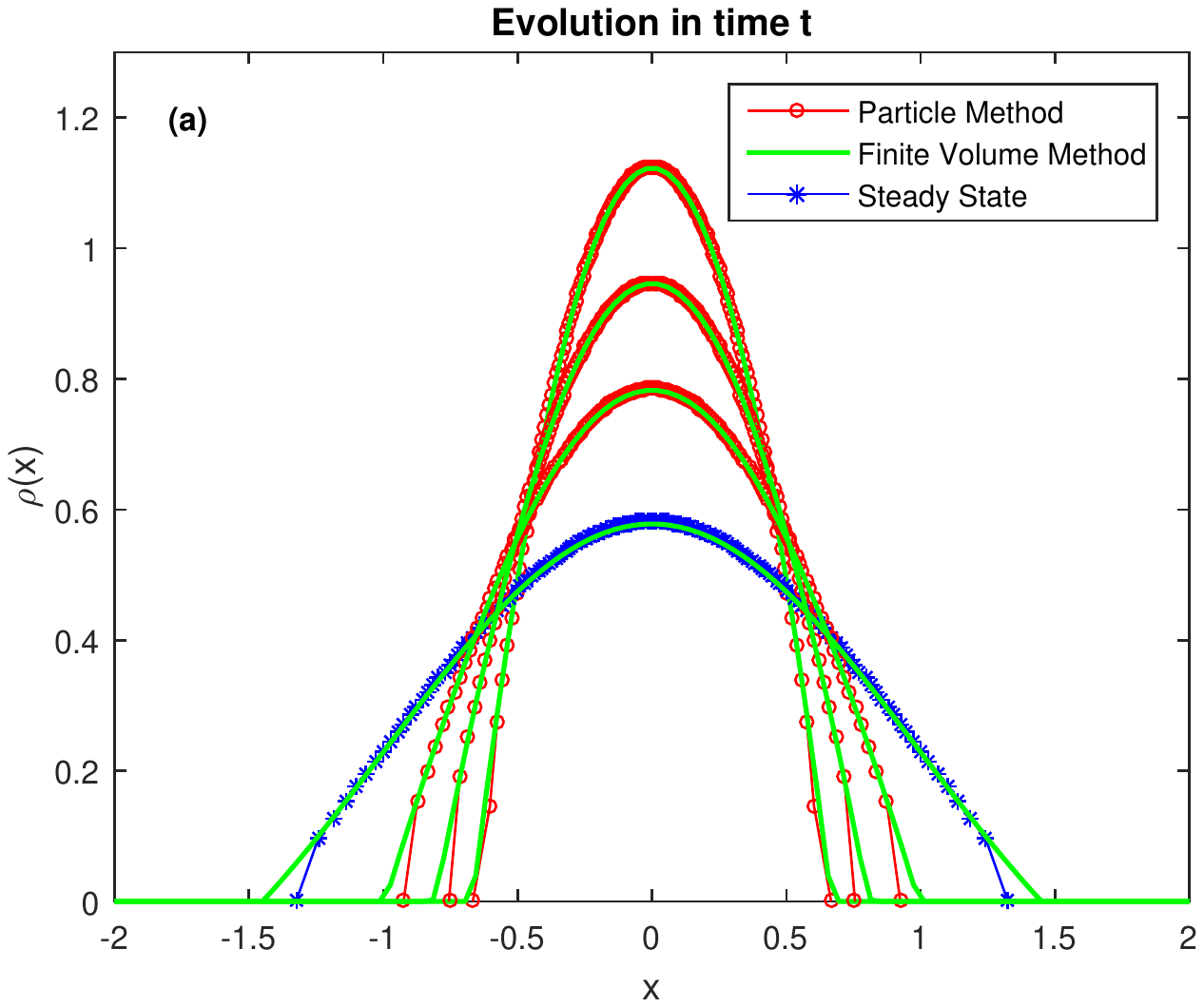}
		\includegraphics[width=12cm,height=8.5cm]{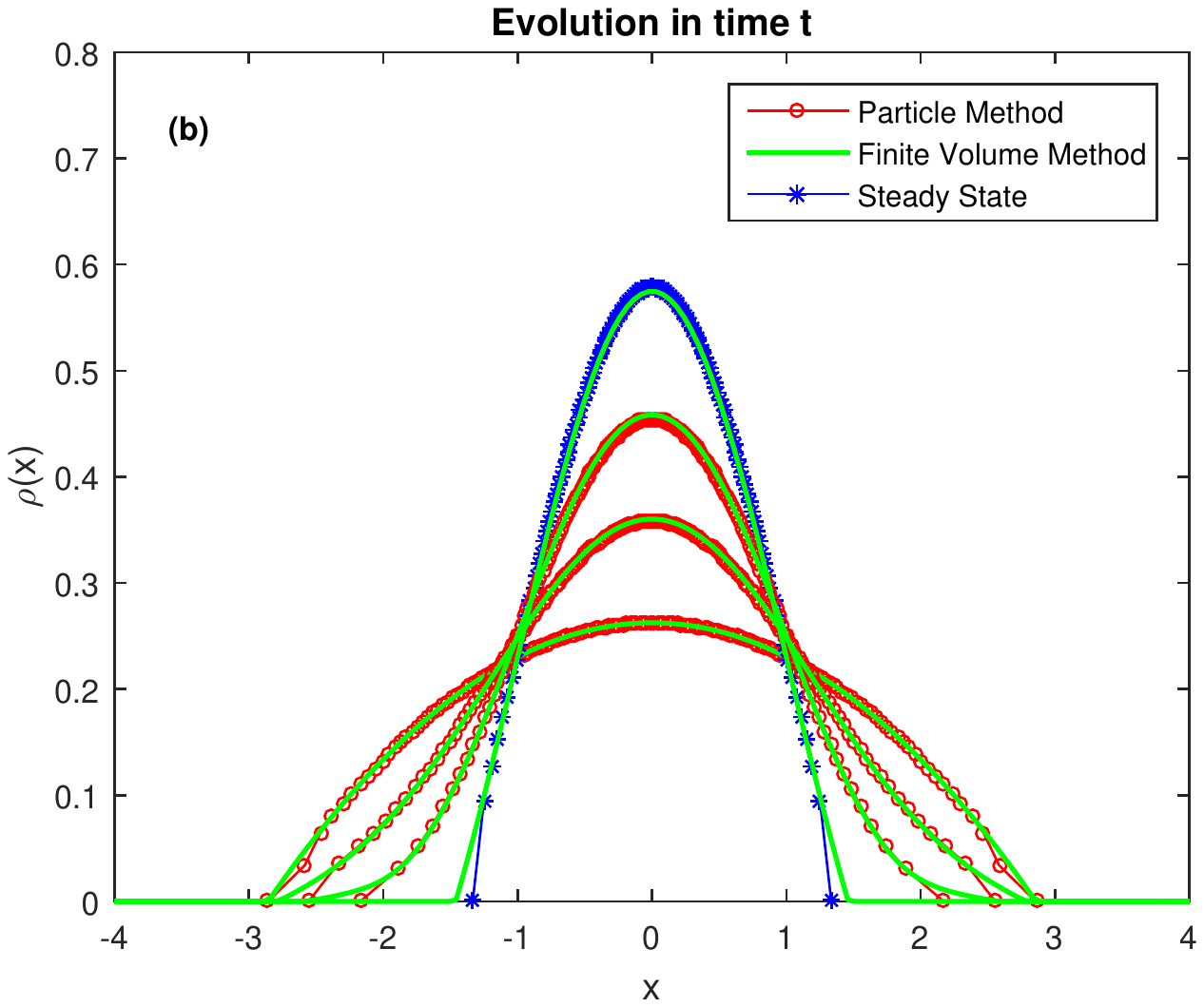}
		\caption{Convergence of the solution to the steady state. $\varepsilon=0.5,$ $G(x)=\frac{1}{\sqrt{\pi}}e^{-x^2}.$ \textbf{(a)} $\rho_{0}(x)= \frac{9}{8}(1-\frac{9}{4}x^2),$ \textbf{(b)} $\rho_{0}(x)= \frac{105}{400}(1-\frac{1225}{10000}x^2)$}
		\label{fig.SteadyState2}
	\end{figure}


	
	
	
	\section*{Acknowledgments}
	MDF acknowledges conversations on this problem with Martin Burger in the past five years. The authors are supported by the local fund of the University of L'Aquila "DP-LAND" (Deterministic Particles for Local And Nonlocal Dynamics), and from the Erasmus Mundus programme "MathMods", www.mathmods.eu. YJ acknowledges support from the Italian INdAM GNAMPA (National group for Mathematical Analysis, Probability, and their applications) project "Analisi di modelli matematici della fisica, della biologia e delle scienze sociali".

\end{document}